\newcommand{\keywordname}{Palavras-chaves}}
\newcommand{\keywordname}{Keywords}}
\newtheorem{proposition}{Proposition}
\newtheorem{lemma}{Lemma}
\newcommand{\printtitle}{%
\makeatletter
\if@twocolumn

\twocolumn[%
  \maketitle
  \begin{onecolabstract}
    \myabstract
  \end{onecolabstract}
  \begin{center}
    \small
    \textbf{\keywordname}
    \\\medskip
    \mykeywords
  \end{center}
  \bigskip
]
\saythanks
\else
  \maketitle
  \begin{onecolabstract}
    \myabstract
  \end{onecolabstract}
  \begin{center}
    \small
    \textbf{\keywordname}
    \\\medskip
    \mykeywords
  \end{center}
  \bigskip
  \onehalfspacing
\fi
\makeatother
}
\author{%
R.~J.~Cintra%
\thanks{%
Departamento de Estat\'istica, Universidade Federal de Pernambuco,
Recife, PE, Brazil.
E-mail: \url{rjdsc,gauss,abraao@de.ufpe.br}}
\and
L. C. R\^ego%
\thanks{Formely Departamento de Estat\'istica, Universidade Federal de Pernambuco, Recife, Brazil; currently
Departamento de Estat\'{\i}stica e Matem\'atica Aplicada,
Universidade Federal do Cear\'a, Brazil.
E-mail: \url{leandro@dema.ufc.br}}
\and
G.~M.~Cordeiro${}^\ast$
\and
A.~D.~C.~Nascimento${}^\ast$
}
\title{%
Beta Generalized Normal Distribution with an Application for SAR Image Processing}
\newcommand{\myabstract}{%
We introduce the beta generalized normal distribution
which is obtained by compounding the beta and generalized normal
[Nadarajah, S.,
A generalized normal distribution,
\emph{Journal of Applied Statistics}.
32, 685--694, 2005]
distributions. The new model includes as sub-models the beta normal,
beta Laplace, normal, and Laplace distributions. The shape of the new
distribution is quite flexible, specially the skewness and the tail
weights, due to two additional parameters. We obtain general
expansions for the moments.
The estimation of the parameters is investigated by
maximum likelihood.
We also proposed a random number generator for the new distribution.
Actual synthetic aperture radar were analyzed and modeled
after the new distribution.
Results could outperform
the $\mathcal{G}^0$, $\mathcal{K}$, and $\Gamma$ distributions
in several scenarios.
}
\newcommand{\mykeywords}{%
Beta normal distribution; generalized normal distribution; hazard function; maximum likelihood estimation; moment; SAR imagery
}
\date{}
\begin{document}

\printtitle

\section{Introduction}

Sonar, laser, ultrasound B-scanners, and synthetic aperture radar (SAR) are
sensing devices which employ coherent illumination for
imaging purposes~\cite{UlabyElachi1990}.
Due to its capability of operating in any weather condition and
providing high spatial image resolution,
these microwave active sensors have been considered as
powerful remote sensing techniques~\cite{Ulabyetal1986a}.
In general terms, the operation of these systems consists of
the emission of orthogonally polarized pulses transmitted towards a target
and
the recording of the returned echo signal.

Due to its acquisition process by means of coherent illuminations, SAR images are strongly contaminated by a particular signal-dependent granular noise called speckle~\cite{NascimentoCintraFreryIEEETGARS}.
Thus,
adequate statistical modeling for this type of imagery is constantly sought~\cite{DoulgerisandEltoft2010}.
Several distributions have been proposed in the literature for modeling intensity SAR data.
According to Frery~\textit{et al.}~\cite{freryetal1997a}, the $\mathcal{G}^0$, $\mathcal{K}$, and $\Gamma$ distributions are the most prominent models for SAR data.

Indeed, a SAR image can be understood as a set of regions described by different probability distributions.
Some researchers have devoted considerable attention to study the beta distribution as a model for SAR images.
For instance, in~\cite{ElZaartZiou2007} the beta
distribution was suggested as an important element for modeling SAR images with multi-modal histogram~\cite{DelignonPieczynski2002}.

Eugene~\emph{et al.}~\cite{eugene2002beta} introduced  a class of distributions
generated from the logit of the beta random variable.
For a given baseline cumulative
distribution function (\mbox{cdf}) $G(x)$, the associated beta generalized
distribution $F(x)$ is defined by
\begin{equation}\label{cdfbetafamily}
F(x)
=
I_{G(x)}(\alpha,\beta)
=
\frac{1}{\mathrm{B}(\alpha,\beta)}
\int_0^{G(x)}t^{\alpha-1}(1-t)^{\beta-1}\mathrm{d}t.
\end{equation}
Here, $\alpha>0$ and $\beta>0$ are two additional parameters which aim to introduce
skewness and to vary tail weights,
$I_y(\alpha,\beta)=\mathrm{B}_y(\alpha,\beta)/\mathrm{B}(\alpha,\beta)$
is the incomplete beta function ratio,
$\mathrm{B}_y(\alpha,\beta)=\int_0^y w^{\alpha-1}(1-w)^{\beta-1}\mathrm{d}w$
is the incomplete beta function,
$\mathrm{B}(\alpha,\beta)=\Gamma(\alpha)\Gamma(\beta)/\Gamma(\alpha+\beta)$
is the beta function and $\Gamma(\cdot)$ is the gamma function.
This class
of beta generalized distributions has been receiving considerable attention
after the work
of Eugene \emph{et al.}~\cite{eugene2002beta}.

The beta distribution is a basic exemplar of (\ref{cdfbetafamily}) by
taking $G(x)=x$.
Although it has only two parameters, the beta density accommodates
a very wide variety of shapes including the standard uniform distribution
for $\alpha=\beta= 1$.
The beta density is symmetric, unimodal, and bathtub shaped
for $\alpha=\beta$, $\alpha,\,\beta > 1$ and $\alpha,\,\beta < 1$, respectively.
It has positive skew when $\alpha<\beta$ and negative skew when $\alpha>\beta$.

Some special beta generalized distributions were developed in recent years.
To cite a few, we identify
the beta normal distribution~\protect{\cite{eugene2002beta}},
the beta Gumbel distribution~\protect{\cite{nadarajah2004gumbel}},
the beta Frechet distribution~\protect{\cite{NadarajahGupta}},
the beta exponential distribution~\protect{\cite{nadarajah2005exponential}},
the beta Weibull distribution~\protect{\cite{lee2007censored}},
the beta Pareto distribution~\protect{\cite{Akinseteetal}},
the beta generalized exponential distribution~\protect{\cite{BarretoSouzaetal}},
and
the beta generalized half-normal distribution~\protect{\cite{Pescimetal}}.

The first goal of this note
is to develop an extension of the generalized
normal (GN) distribution defined from~\eqref{cdfbetafamily}:
the beta generalized normal (BGN) distribution.
It may be mentioned
that although several skewed distribution functions exist on the positive
real axis, but not many skewed distributions are available on the whole
real line, which are easy to use for data analysis purpose. The main
role of the extra parameters $\alpha$ and $\beta$ is that the BGN
distribution can be used to model skewed real data,
a feature which is very common in practice.
The BGN distribution with five parameters to control location, dispersion, modality and skewness has great flexibility.

As a second goal,
we emphasize the BGN distribution ability for
modeling bimodal phenomena,
which naturally occurs in the context of
image processing of SAR
imagery~\protect\cite{ElZaartZiou2007}.
Thus,
the BGN distribution is a candidate to SAR data modeling.
We sought a data analysis using
simulated and actual SAR imagery.
Also,
we compare the BGN model with several
existing models for SAR,
such as
the gamma distribution~\protect\cite{Delignonetal2002},
the $\mathcal{K}$ distribution~\protect\cite{Blacknell1994}, and
the $\mathcal{G}^0$ distribution~\protect\cite{freryetal1997a}.
The gamma distribution is regarded as standard model for the herein considered
types of SAR data~\protect\cite{Delignonetal2002}.
For such,
actual data is analyzed and
fitted according to all models above,
where
the original and corrected Akaike information and Bayesian information
criteria
are employed as
the goodness-of-fit
measures~\protect\cite{Gao2010}.

The rest of the paper is organized as follows.
In Section~\ref{section-distribution},
we define the BGN distribution,
derive its density,
and
discuss particular cases.
General expansions for the moments are derived in
Section~\ref{section-moments}.
Maximum likelihood estimation is investigated in Section~\ref{section-mle}.
In Section~\ref{section-rng},
we propose a random number generator for the new distribution.
Section~\ref{section-sar} details the SAR image analysis based on actual data.
Section~\ref{section-conclusion} gives some concluding remarks.

\section{The BGN Distribution}
\label{section-distribution}

The GN distribution with location parameter $\mu$, dispersion parameter $\sigma$
and shape parameter $s$ has probability density function (\mbox{pdf})
given by~\cite{nadarajah2005generalized}
$$
g(x)=
\frac{s}{2\sigma\,\Gamma(1/s)}
\exp\left\{-\left|\frac{x-\mu}{\sigma}\right|^s\right\},
\quad x\in \mathbb{R},
$$
where $\mu$ is a real number, and $\sigma$ and $s$ are positive real numbers.
For the special case $s=1$, the above density function reduces to the Laplace
distribution with location parameter $\mu$ and scale parameter $\sigma$.
Similarly,
for $s=2$, the normal distribution is obtained with mean $\mu$ and variance
$\sigma^2/2$. The main feature of the GN model is that new parameter $s$ can
introduce some skewness and kurtosis.

The GN cumulative function can be expressed
as follows~\cite[Eq.~5-6]{nadarajah2005generalized}
\begin{equation}\label{eq.F(x)}
G(x)=
\begin{cases}
\frac{\Gamma(1/s, (\frac{\mu-x}{\sigma})^s)}{2\Gamma(1/s)},
& x \leq \mu,\\
1-\frac{\Gamma(1/s, (\frac{x-\mu}{\sigma})^s)}{2\Gamma(1/s)},
& x > \mu,
\end{cases}
\end{equation}
where $\Gamma(s,x) = \int_x^\infty t^{s-1} \exp(-t) \mathrm{d}t$
is the complementary
incomplete gamma function.

The density function of the standardized random variable $Z=(X-\mu)/\sigma$
is
$$
\phi_s(z)=\frac{s}{2 \Gamma(1/s)}\exp(-|z|^s),\quad z\in \mathbb{R}.
$$
Thus, $g(x)=\sigma^{-1}\,\phi_s(\frac{x-\mu}{\sigma})$.
From (\ref{eq.F(x)}), the \mbox{cdf} of the standardized \text{GN}
distribution reduces to
\begin{align}
\Phi_s(z)=
\begin{cases}
\frac{\Gamma(1/s,(-z)^s)}{2\Gamma(1/s)},& z \leq 0,\\
1-\frac{\Gamma(1/s, z^s)}{2\Gamma(1/s)},& z > 0.
\end{cases}
\label{Phi}
\end{align}

Based on equation (\ref{cdfbetafamily}), we propose a natural generalization
of the GN distribution and provide a comprehensive treatment of its
mathematical properties. The \mbox{cdf} of the BGN distribution is
\begin{align*}
F(x)
=
\frac{1}{\mathrm{B}(\alpha,\beta)}
\int_0^{\Phi_s(\frac{x-\mu}{\sigma})}
t^{\alpha-1}\,(1-t)^{\beta-1}\,\mathrm{d}t
=
I_{\Phi_s(\frac{x-\mu}{\sigma})}(\alpha,\beta).
\label{cdfBGN}
\end{align*}
Consequently, its density function reduces to
\begin{equation}\label{bgndensity}
f(x)=\frac{1}{\sigma\mathrm{B}(\alpha,\beta)}\,
\left[\Phi_s\left(\frac{x-\mu}{\sigma}\right)\right]^{\alpha-1}
\left[1-\Phi_s\left(\frac{x-\mu}{\sigma}\right)
\right]^{\beta-1}\phi_s\left(\frac{x-\mu}{\sigma}\right).
\end{equation}
Here, the parameters $\alpha$ and $\beta$ control skewness through the
relative tail weights. They provide greater flexibility in the form of the
distribution and consequently in modeling observed real data.
A random variable $X$
with density function (\ref{bgndensity}) is denoted by
BGN$(\alpha,\beta,\mu,\sigma,s)$.
Clearly,
the beta normal and beta Laplace distributions are special models
of (\ref{bgndensity}) corresponding to $s=1$ and $s=2$, respectively.
The GN distribution is a special sub-model for $\alpha=\beta=1$.
The normal distribution
arises for $\alpha=\beta=1$ and $s=2$,
whereas the Laplace distribution corresponds
to $\alpha=\beta=s=1$. Location and scale parameters are
redundant in (\ref{bgndensity}),
since if $X\sim$BGN$(\alpha,\beta,0,1,s)$ then
$Y=\sigma\,X+\mu \sim$BGN$(\alpha,\beta,\mu,\sigma,s)$.

Figure~\ref{fig0} displays the BGN density function (\ref{bgndensity})
for selected parameter values. As parameters vary, several useful features,
such as bi-modality and pronounced skewness, can be obtained. These facts
illustrate the flexibility of the BGN distribution to analyze real data.
\begin{figure}
\centering
\subfigure[$\alpha=\beta=1$]{\epsfig{file=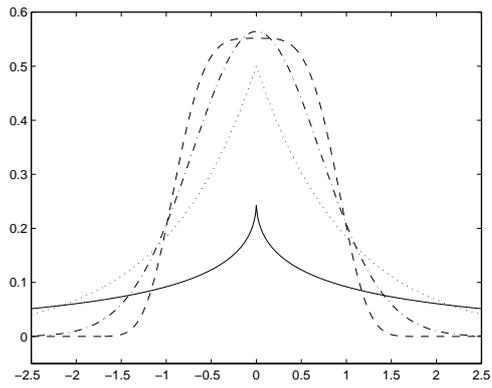,width=0.48\linewidth}}
\subfigure[$\alpha=\beta=0.5$]{\epsfig{file=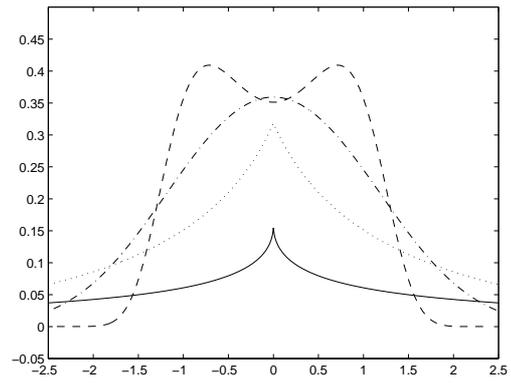,width=0.48\linewidth}}
\\
\subfigure[$\alpha=0.5, \beta=2$]{\epsfig{file=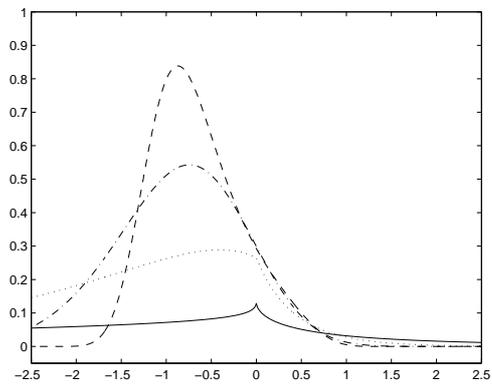,width=0.48\linewidth}}
\subfigure[$\alpha=0.25, \beta=0.5$]{\epsfig{file=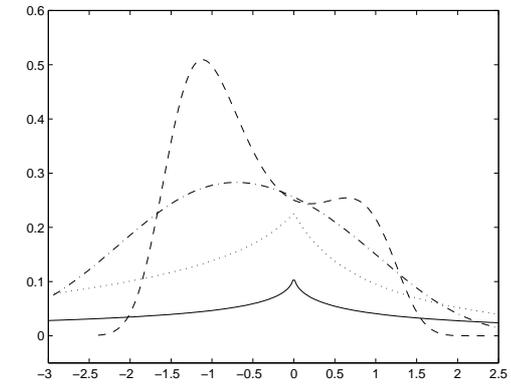,width=0.48\linewidth}}
\caption{Plots of the BGN density function for some parameter values:
$\mu=0$, $\sigma = 1$, and $s\in\{1/2,1,2,4\}$
(solid, dotted, dash-dotted, and dashed, respectively).}
\label{fig0}
\end{figure}

Note that the limiting behavior of the GN density with respect to~$s$
is given by
\begin{align*}
\lim_{s\to\infty}
\phi_s(z)=
\lim_{s\to\infty}
\frac{s}{2 \Gamma(1/s)}
\exp
(-|z|^s)=
\begin{cases}
\frac{1}{2}, & |z|<1, \\
0, & \text{otherwise.}
\end{cases}
\end{align*}

Consequently, the dominate convergence theorem leads to
\begin{align*}
\lim_{s\to\infty}
\Phi_s(z)
=
\begin{cases}
0, & z<-1, \\
\frac{1}{2}(z+1), & |z|<1,\\
1, &z>1.
\end{cases}
\end{align*}
Making appropriate substitutions, we conclude that the limiting
distribution $\lim_{s\to\infty}f(x)$ is related to the usual beta distribution
\begin{align}
\label{limit.beta}
\lim_{s\to\infty}
f(x)
&=
\frac{1}{2\sigma}
f_\text{Beta}
\left(
\frac{1}{2}\left( \frac{x-\mu}{\sigma}+1 \right)
\right),
\quad
x\in[\mu-\sigma,\mu+\sigma],
\end{align}
where
$f_\text{Beta}(x)=\frac{1}{\mathrm{B}(\alpha,\beta)}x^{\alpha-1}\,(1-x)^{\beta-1}$
is the beta density defined over $x\in[0,1]$.
Figure~\ref{fig1} illustrates this behavior.
As $s$ increases, plots in Figures~\ref{fig1}(a) and~\ref{fig1}(b) tend to the
U-shaped and triangular density functions, respectively,
which are special models of the beta distribution.
Figure~\ref{fig.diag} depicts a diagram with the relations among the discussed models.

\begin{figure}
\centering
\subfigure[$\alpha=\beta=0.5$]{\epsfig{file=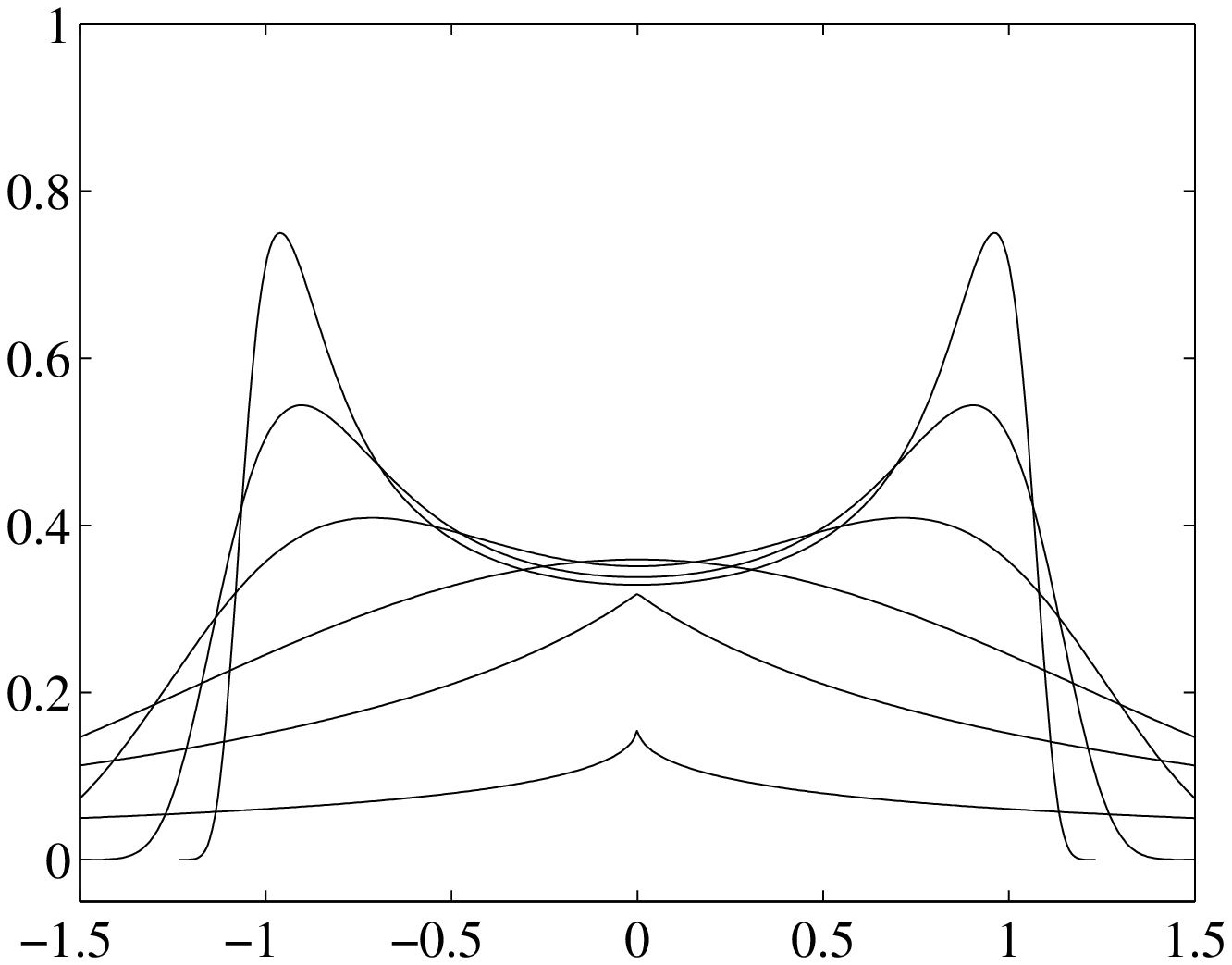,width=0.48\linewidth}}
\subfigure[$\alpha=2, \beta=1$]{\epsfig{file=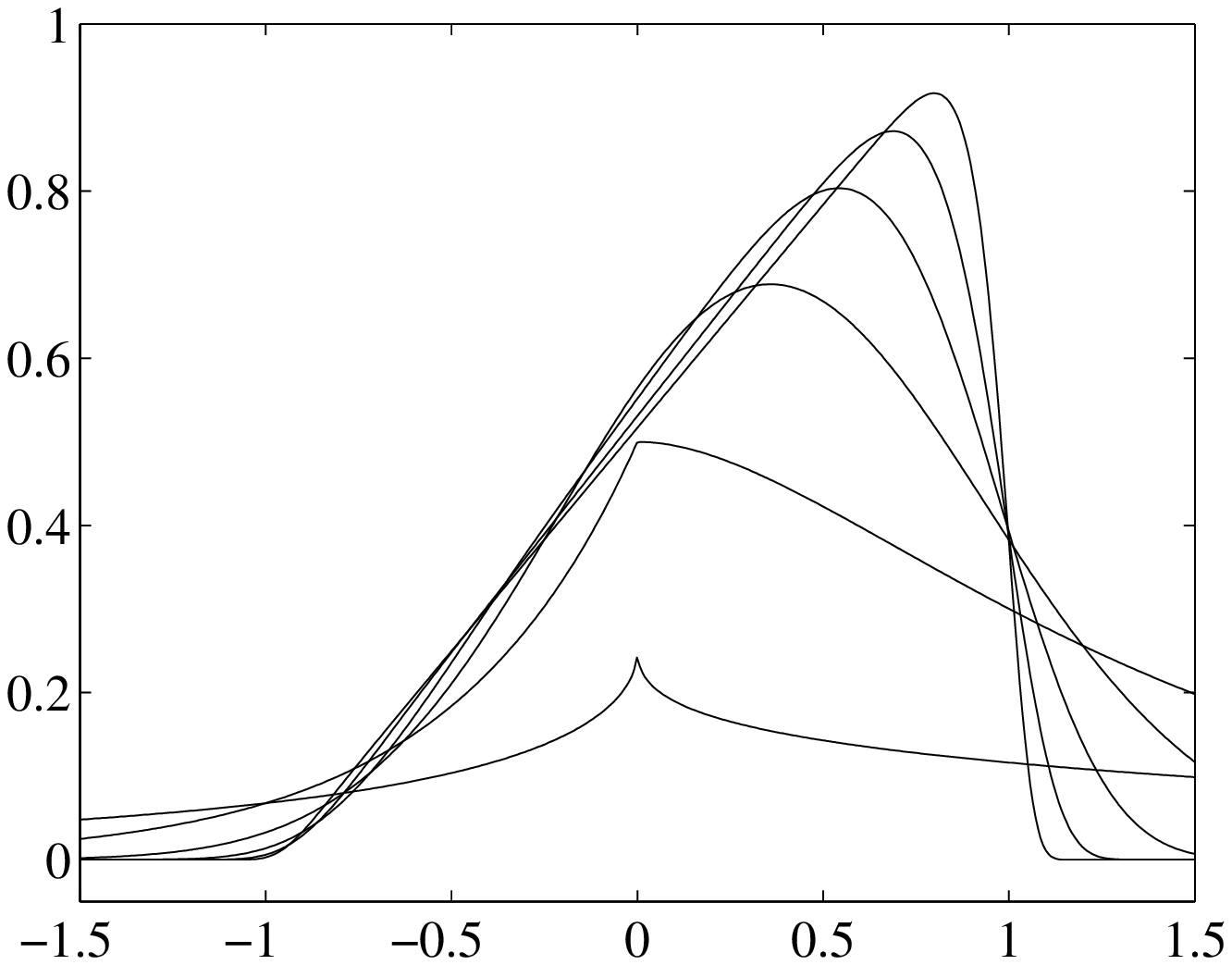,width=0.48\linewidth}}
\caption{Limiting behavior of the BGN density function for increasing values of $s\in\{1/2,1,2,4,8,16\}$,
$\mu=0$, and $\sigma=1$:
(a) U-shaped beta density and (b) triangular density.}
\label{fig1}
\end{figure}

\begin{figure}
\centering
\input{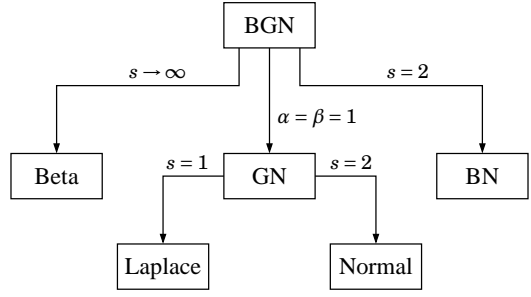}
\caption{Diagram relating the current models.}
\label{fig.diag}
\end{figure}

\section{Moments}
\label{section-moments}

Let $X$ be a random variable having the BGN$(\alpha,\beta,\mu,\sigma,s)$
distribution. The $n$th moment of $X$ becomes
\begin{align*}
\mathrm{E}(X^n)=
\frac{1}{\mathrm{B}(\alpha,\beta)}\frac{1}{\sigma}\int_{-\infty}^\infty x^n
\left[\Phi_s\left(\frac{x-\mu}{\sigma}\right)\right]^{\alpha-1}
\left[1-\Phi_s\left(\frac{x-\mu}{\sigma}\right)\right]^{\beta-1}
\phi_s \left(\frac{x-\mu}{\sigma}\right) \mathrm{d}x.
\end{align*}

\begin{proposition}
\label{proposition-moment}
The $n$th moment of $X$ can be expressed as
\begin{equation}
\label{eq.EXn}
\begin{split}
\mathrm{E}(X^n)
=
&
\frac{\mu^n}{\mathrm{B}(\alpha,\beta)}
\sum_{j=0}^{\infty}
(-1)^j\binom{\beta-1}{j}
\sum_{i=0}^n
\binom{n}{i}
\left(\frac{\sigma}{\mu}
\right)^i
\\
&
\times
\sum_{k=0}^\infty
\left[
v_k(j+\alpha-1)
+(-1)^{i+k}
\binom{j+\alpha-1}{k}
\right]
J_{i,k}^{(s)}
,
\end{split}
\end{equation}
where
\begin{align*}
v_k(\alpha)=\sum_{m=k}^\infty (-1)^{k+m}\,\binom{\alpha}{m}\,\binom{m}{k}
\end{align*}
and
\begin{align*}
J_{i,k}^{(s)}=\int_0^\infty z^i\,\phi_s(z)\,\Phi_s(z)^k \mathrm{d}z
.
\end{align*}
\end{proposition}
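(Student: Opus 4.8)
The plan is to reduce the defining integral to the standardized variable and then peel off each factor by a suitable series expansion. First I would substitute $z=(x-\mu)/\sigma$, turning $x^n$ into $(\mu+\sigma z)^n$ and absorbing the Jacobian $\sigma\,\mathrm{d}z$ against the $1/\sigma$ prefactor. Expanding $(\mu+\sigma z)^n=\mu^n\sum_{i=0}^n\binom{n}{i}(\sigma/\mu)^i z^i$ by the binomial theorem produces the finite sum over $i$ together with the factors $\mu^n(\sigma/\mu)^i$. Next I would expand $[1-\Phi_s(z)]^{\beta-1}$ by the generalized binomial series $\sum_{j\ge0}(-1)^j\binom{\beta-1}{j}\Phi_s(z)^j$, which supplies the outer sum over $j$ and merges with the remaining $\Phi_s(z)^{\alpha-1}$ to give $\Phi_s(z)^{j+\alpha-1}$. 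At this stage the moment reads as a double sum of integrals of the form $\int_{-\infty}^\infty z^i\,\Phi_s(z)^{j+\alpha-1}\phi_s(z)\,\mathrm{d}z$.

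The key structural step is to fold these integrals onto the half-line $[0,\infty)$ so that the $J_{i,k}^{(s)}$ appear. Since $\phi_s$ is even and, from \eqref{Phi}, $\Phi_s(-z)=1-\Phi_s(z)$, splitting at the origin and substituting $z\mapsto-z$ on the negative part gives
\begin{align*}
\int_{-\infty}^\infty z^i\,\Phi_s(z)^{\gamma}\phi_s(z)\,\mathrm{d}z
=\int_0^\infty z^i\,\Phi_s(z)^{\gamma}\phi_s(z)\,\mathrm{d}z
+(-1)^i\int_0^\infty z^i\,[1-\Phi_s(z)]^{\gamma}\phi_s(z)\,\mathrm{d}z,
\end{align*}
with $\gamma=j+\alpha-1$. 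To express both integrands in powers $\Phi_s(z)^k$, I would treat the two pieces by different expansions: the second by the ordinary binomial series $[1-\Phi_s(z)]^\gamma=\sum_{k\ge0}(-1)^k\binom{\gamma}{k}\Phi_s(z)^k$, and the first by the rearrangement $\Phi_s(z)^\gamma=\sum_{k\ge0}v_k(\gamma)\,\Phi_s(z)^k$. The latter identity is exactly what the coefficients $v_k$ encode: writing $u^\gamma=[1-(1-u)]^\gamma$, expanding in powers of $1-u$, and then re-expanding each $(1-u)^m$ in powers of $u$ yields $u^\gamma=\sum_k\big[\sum_{m\ge k}(-1)^{k+m}\binom{\gamma}{m}\binom{m}{k}\big]u^k$, with the bracket equal to $v_k(\gamma)$. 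Substituting both expansions, the $\int_0^\infty z^i\,\Phi_s(z)^k\phi_s(z)\,\mathrm{d}z$ factors become $J_{i,k}^{(s)}$ and the two coefficients add to $v_k(\gamma)+(-1)^{i+k}\binom{\gamma}{k}$, reproducing \eqref{eq.EXn} after reassembling the $i$-, $j$-, and $k$-sums in the stated order.

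The main obstacle I anticipate is justifying the term-by-term integration after each series expansion. The series for $[1-\Phi_s(z)]^{\beta-1}$ and for $\Phi_s(z)^\gamma$ converge pointwise for $\Phi_s(z)\in(0,1)$, but the convergence degenerates as $z\to\pm\infty$, where $\Phi_s(z)\to0$ or $1$; I would control this by exploiting the rapid decay of $\phi_s$, dominated by $\exp(-|z|^s)$, to produce an integrable majorant and then invoke the dominated convergence theorem (equivalently Fubini--Tonelli) to interchange $\sum$ and $\int$. A secondary point is the validity of the double-sum rearrangement defining $v_k(\gamma)$, i.e.\ passing from the expansion in $(1-u)$ to the expansion in $u$, which is legitimate on the relevant range $0<u<1$ by absolute convergence of the binomial series there. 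Everything else is routine bookkeeping in collecting the finite $i$-sum together with the two infinite sums.
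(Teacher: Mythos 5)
Your proposal is correct and follows essentially the same route as the paper's own proof: the same substitution $z=(x-\mu)/\sigma$ with binomial expansion of $(\mu+\sigma z)^n$, the same generalized binomial expansion of $[1-\Phi_s(z)]^{\beta-1}$, the same folding onto $[0,\infty)$ via $\Phi_s(-z)=1-\Phi_s(z)$ and evenness of $\phi_s$, and the same pair of expansions on the half-line (ordinary binomial for $[1-\Phi_s(z)]^{j+\alpha-1}$ and the $v_k$ rearrangement, which is exactly the paper's Lemma~\ref{lemApp1}, for $\Phi_s(z)^{j+\alpha-1}$). Your remarks on justifying the term-by-term integration via dominated convergence go slightly beyond the paper, which performs these interchanges without comment.
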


The above proposition is proved in Appendix~\ref{appendix-proof-proposition}.
It is also clear that the evaluation of the $n$th moment
is related to the computation of $J_{i,k}^{(s)}$.
Appendix~\ref{appendix-J}
provides an expansion derivation for $J_{i,k}^{(s)}$,
as shown in~\eqref{J-form-1}.

\section{Maximum Likelihood Estimation}
\label{section-mle}

Consider a random variable having the BGN distribution and
let $\mathbf{\theta}=(s,\mu,\sigma,\alpha,\beta)^T$ be its parameter
vector. The log-likelihood for a particular observation is
\begin{align*}
\ell(\mathbf{\theta})=&-
\log \mathrm{B}(\alpha,\beta)-
\log\sigma+
(\alpha-1)
\log
\Phi_s\left(\frac{x-\mu}{\sigma}\right)
\\&+(\beta-1)\log\left[1-\Phi_s\left(\frac{x-\mu}{\sigma}\right)\right]
+\log\phi_s\left(\frac{x-\mu}{\sigma}\right).
\end{align*}

To evaluate the unit score vector
$\left(
\frac{\mathrm{d}}{\mathrm{d}s}\ell,
\frac{\mathrm{d}}{\mathrm{d}\mu}\ell,
\frac{\mathrm{d}}{\mathrm{d}\sigma}\ell,
\frac{\mathrm{d}}{\mathrm{d}\alpha}\ell,
\frac{\mathrm{d}}{\mathrm{d}\beta}\ell
\right)^T$,
simple algebra yields
\begin{align*}
\frac{\mathrm{d}}{\mathrm{d}s}\ell
\left[\frac{\alpha-1}{\Phi_s\left(\frac{x-\mu}{\sigma}\right)}
-\frac{\beta-1}{1-\Phi_s\left(\frac{x-\mu}{\sigma}\right)}\right]
\frac{\mathrm{d}}{\mathrm{d}s}
\Phi_s\left(\frac{x-\mu}{\sigma}\right)
+
\frac{\mathrm{d}}{\mathrm{d}s}
\log
\phi_s\left(\frac{x-\mu}{\sigma}\right).
\end{align*}
From the definition of $\Phi_s$ given in (\ref{Phi}), it is clear that
$\frac{\mathrm{d}}{\mathrm{d}s}\Phi_s\left(\frac{x-\mu}{\sigma}\right)$
depends on the derivatives of the gamma and the incomplete gamma functions.
Then,
these particular derivatives deserve close examination as follows.
For $x>0$, the following derivative holds~\cite{abramowitz1965handbook,gueddes1990evaluation}:
\begin{align*}
\frac{\mathrm{d}}{\mathrm{d}s}
\Gamma(1/s,x^s)
&=
-
\frac{1}{s^2}
\Big[\log (x^s) \Gamma(1/s,x^s)+x^s T(3,1/s,x^s)
\Big]
-x\exp(-x^s)\log(x)\\
&\triangleq\tilde\psi(s,x),
\end{align*}
where the quantity $\tilde\psi(s,x)$ is an auxiliary function
for the sake of convenience of manipulation and
$T(3,\cdot,\cdot)$ is a particular case of the Meijer $G$-function
given by~\cite[p.~156]{gueddes1990evaluation}:
\begin{align*}
T(3,a,x)
=&
G_{2}^3\,{}_3^0
\left(
x
\left|
\begin{matrix}
0,0 \\
-1,a-1,-1
\end{matrix}\right.
\right)
\\
=&
\frac{x^{a-1}{}_2\mathrm{F}_2\left(a,a; 1+a,1+a; -x\right)}{a^2}
-
\frac{\Gamma(a)\log(x)}{x}
+
\frac{\Gamma(a)\psi(a)}{x}
,
\end{align*}
where $\psi(\cdot)$ is the digamma function.
Consequently, after some manipulations, we obtain
\begin{align*}
\frac{\mathrm{d}}{\mathrm{d}s}
\Phi_s\left(\frac{x-\mu}{\sigma}\right)
&=
\frac{1}{2\Gamma(1/s)}
\begin{cases}
s^2\tilde\psi\left( s, (\frac{\mu-x}{\sigma})^s\right)+ \psi(1/s)\Gamma(1/s, (\frac{\mu-x}{\sigma})^s),
&
x\leq\mu,
\\
-[s^2\tilde\psi\left( s, (\frac{x-\mu}{\sigma})^s\right)+ \psi(1/s)\Gamma(1/s, (\frac{x-\mu}{\sigma})^s)],
&
x>\mu.
\end{cases}
\end{align*}
Another required quantity is the following:
\begin{align*}
\log
\phi_s\left( \frac{x-\mu}{\sigma} \right)
&=
\log
\left[\frac{s}{2\Gamma(1/s)}
\exp\left( -\left|\frac{x-\mu}{\sigma} \right|^s \right)
\right]
\\
&=
\log s
-
\log(2\Gamma(1/s))
-
\left| \frac{x-\mu}{\sigma} \right|^s
.
\end{align*}
Therefore, we have
\begin{align*}
\frac{\mathrm{d}}{\mathrm{d}s}
\log
\phi_s\left(\frac{x-\mu}{\sigma}\right)
&=
\frac{1}{s}
+
\frac{\psi(1/s)}{s^2}
-
\left|\frac{x-\mu}{\sigma} \right|^s
\log
\left(
\left|\frac{x-\mu}{\sigma} \right|
\right).
\end{align*}
Now, we examine the derivatives of $\ell$ with respect to $\mu$ and $\sigma$.
They are given by
\begin{align*}
\frac{\mathrm{d}}{\mathrm{d}\mu}
\ell
=&
-
\frac{\alpha-1}{\sigma}
\frac{\phi_s\left(\frac{x-\mu}{\sigma}\right)}{\Phi_s\left(\frac{x-\mu}{\sigma}\right)}
+
\frac{\beta-1}{\sigma}
\frac{\phi_s\left(\frac{x-\mu}{\sigma}\right)}{1-\Phi_s\left(\frac{x-\mu}{\sigma}\right)}
+
\frac{s}{\sigma}
\left|\frac{x-\mu}{\sigma}\right|^{s-1}
\mathrm{sign}(x-\mu)
\end{align*}
and
\begin{align*}
\frac{\mathrm{d}}{\mathrm{d}\sigma}
\ell
=&
-
\frac{1}{\sigma}
-
\frac{\alpha-1}{\sigma^2}
(x-\mu)
\frac{\phi_s\left(\frac{x-\mu}{\sigma}\right)}{\Phi_s\left(\frac{x-\mu}{\sigma}\right)}
+
\frac{\beta-1}{\sigma^2}
(x-\mu)
\frac{\phi_s\left(\frac{x-\mu}{\sigma}\right)}{1-\Phi_s\left(\frac{x-\mu}{\sigma}\right)}
-
\frac{s}{\sigma}
\left|\frac{x-\mu}{\sigma}\right|^s,
\end{align*}
respectively, where we use the fact that $|x| = x \,\mathrm{sign}(x)$.

In order to calculate the derivative with respect to $\alpha$, note that:
\begin{align*}
\frac{\mathrm{d}}{\mathrm{d}\alpha}
\log\mathrm{B}(\alpha,\beta)
&=
\psi(\alpha) - \psi(\alpha+\beta).
\end{align*}
Therefore,
it follows immediately that
\begin{align*}
\frac{\mathrm{d}}{\mathrm{d}\alpha}
\ell
=&
\psi(\alpha+\beta) - \psi(\alpha) +
\log\Phi_s\left(\frac{x-\mu}{\sigma}\right)
\end{align*}
and
\begin{align*}
\frac{\mathrm{d}}{\mathrm{d}\beta}
\ell
=&
\psi(\alpha+\beta) - \psi(\beta) +
\log\left[1-\Phi_s\left(\frac{x-\mu}{\sigma}\right)\right].
\end{align*}

\section{BGN Random Number Generator}
\label{section-rng}

In this section,
a random number generator (RNG) for the BGN distribution
is introduced.
This RNG allows a Monte Carlo study
to determine
the influence of the BGN shape parameters $\alpha$, $\beta$,  and $s$.

We consider the inverse transform algorithm
for generating continuous random variables~\protect{\cite[p.~67]{ross2006simulation}}.
Indeed,
a BGN random variable $X\sim \text{BGN}(\alpha,\beta,\mu,\sigma,s)$
can be related to
a beta distributed random variable $U\sim \operatorname{B}(\alpha,\beta)$
according to
$X = G^{-1}(U)$,
where $G$ is given in~\eqref{eq.F(x)}.
Notice that
\begin{align*}
G(x)
=
\begin{cases}
\frac{1}{2}
\left\{
1 - F_\Gamma\left[\left(-\frac{x-\mu}{\sigma}\right)^s\right]
\right\}
,
&x\leq\mu, \\
\frac{1}{2}
\left\{
1 + F_\Gamma\left[\left(\frac{x-\mu}{\sigma}\right)^s\right]
\right\}
,
&x>\mu,
\end{cases}
\end{align*}
where $F_\Gamma(\cdot)$ is the cdf of the gamma distribution
with shape and scape parameters given by $1/s$ and 1,
respectively.

Let $x$ and $u$ be realizations of $X$ and $U$, respectively.
For $x\leq\mu$,
we obtain the inverse transformation according to the following
manipulation
\begin{align*}
\frac{1}{2}
\left\{
1 - F_\Gamma\left[\left(-\frac{x-\mu}{\sigma}\right)^s\right]
\right\}
=
u.
\end{align*}
Therefore,
\begin{align*}
x
=
\mu
-
\sigma
\left[F^{-1}_\Gamma(1-2u)\right]^{1/s}
,
\end{align*}
where $F^{-1}_\Gamma(\cdot)$ is the quantile function of the
gamma distribution with parameters $1/s$ and 1.
Since
$0\leq F_\Gamma\left[\left(-\frac{x-\mu}{\sigma}\right)^s\right]\leq1$, we have $0 \leq u \leq 1/2$.

Analogously,
for $x>\mu$,
we obtain
\begin{align*}
x
=
\mu
+
\sigma
\left[
F^{-1}_\Gamma(2u-1)
\right]^{1/s}
,
\quad
1/2 < u \leq 1
.
\end{align*}

Therefore,
the RNG for $X$ can be algorithmically described as follows:
\begin{algorithmic}[1]
\State Generate $U\sim \text{beta}(\alpha,\beta)$
\If{$U\in[0,1/2]$}
   \State $X=\mu-\sigma \left[F^{-1}_\Gamma(1-2u)\right]^{1/s}$
\Else
   \State $X=\mu+\sigma \left[F^{-1}_\Gamma(2u-1)\right]^{1/s}$
\EndIf
\State \textbf{return} $X$.
\end{algorithmic}

Figure~\ref{passa0} displays
four cases where theoretical curves are compared with
randomly generated points for
$\alpha=0.1$,
$\beta=0.3$,
$\mu=0$,
$\sigma=1$, and
$s\in\{2,4,6,12\}$.

\begin{figure}
\centering
\psfrag{BGN1}[c][c][1.0]{\;\;\;\;\;\scriptsize{Generated values}}
\psfrag{BGN2}[c][c][1.0]{\;\;\;\;\;\scriptsize{Generated values}}
\psfrag{BGN3}[c][c][1.0]{\;\;\;\;\;\scriptsize{Generated values}}
\psfrag{BGN4}[c][c][1.0]{\;\;\;\;\;\scriptsize{Generated values}}
\psfrag{Density}[c][c][1.0]{\;\;\;\;\;\scriptsize{Density}}

\subfigure[BGN($0.1,0.3,0,1,2$)\label{powerr1}]{\includegraphics[width=.48\linewidth]{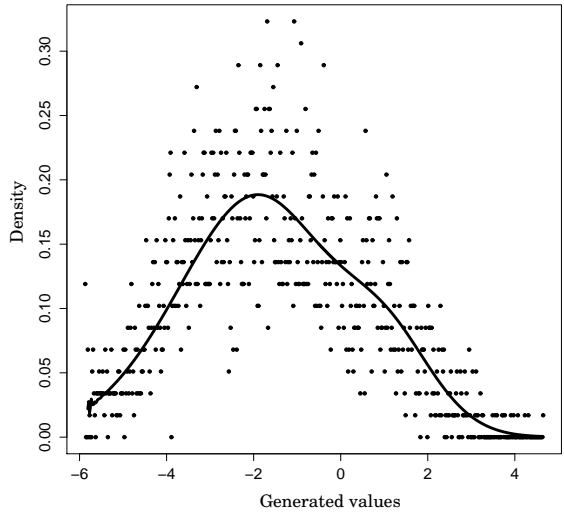}}
\subfigure[BGN($0.1,0.3,0,1,4$)\label{powerr2}]{\includegraphics[width=.48\linewidth]{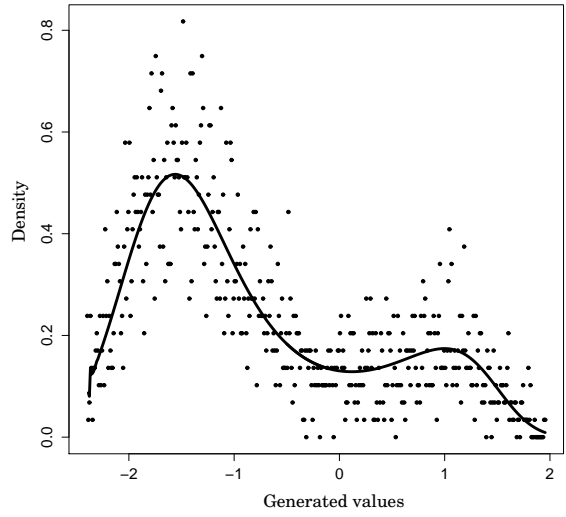}}
\subfigure[BGN($0.1,0.3,0,1,6$)\label{powerr3}]{\includegraphics[width=.48\linewidth]{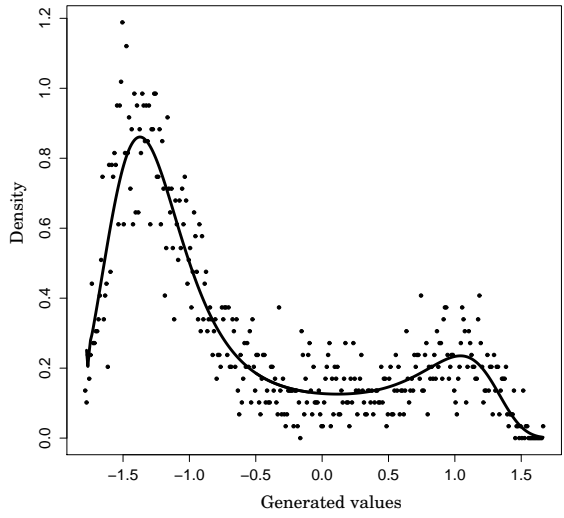}}
\subfigure[BGN($0.1,0.3,0,1,12$)\label{powerr4}]{\includegraphics[width=.48\linewidth]{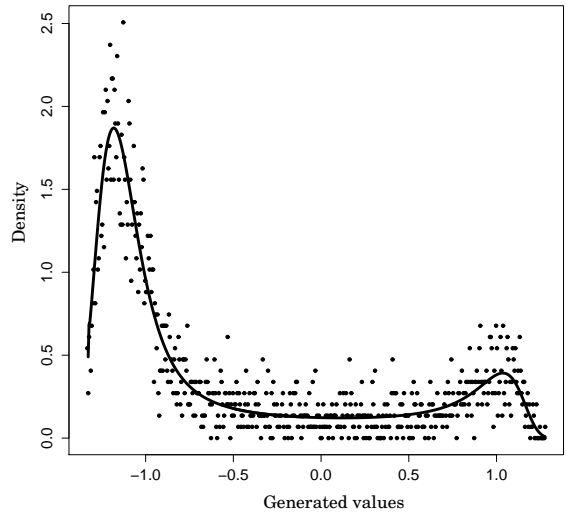}}
\caption{BGN random number generation.}
\label{passa0}
\end{figure}

\section{SAR Image Processing}
\label{section-sar}

In this section,
we assess the proposed statistical modeling by means
of simulated and actual data analysis.
First,
we employ the proposed BGN RNG to generate
synthetic data
to be submitted to ML estimation.
We show evidence of the effectiveness of the derived
methods.
Subsequently,
we employ ML estimation to actual SAR data
aiming their statistical modeling.

\subsection{Influence Study Based on Shape Parameters}

To assess the effect of the shape parameters,
we performed a Monte Carlo
simulation with $M=10^3$ replications.

For each replication
the following
steps were considered:

\begin{enumerate}[(i)]
\item
Simulated BGN distributed
images of $7\times 7$, $11 \times 11$, and $20 \times 20$ pixels
were obtained by means of the BGN RNG
furnishing sample sizes of $N\in\{49,121,400\}$.

\item
Three scenarios were considered:
(a)~$\alpha=1$, $\beta=1$, and $s\in\{1,1.5,2,1.5,3,3.5,4,4.5,5\}$;
(b)~$\alpha=1$, $\beta\in\{1,1.5,2,1.5,3,3.5,4,4.5,5\}$, and $s=1$;
and
(c)~$\alpha\in\{1,1.5,2,1.5,3,3.5,4,4.5,5\}$, $\beta=1$, and $s=1$.
In all cases,
$\mu=0$, and $\sigma=1$;

\item
Generated data was submitted to ML estimation
to obtain estimate parameters
and an estimated BGN pdf $\hat{f}(\cdot)$;

\item
Squared errors between the exact and estimated pdfs were computed.

\end{enumerate}

This procedure was repeated $M$ times, which furnished the
mean squared error (MSE).
Figure~\ref{influ} displays the relationship between shape parameters and the MSE.

As expected from the ML estimation asymptotic properties, in general terms, the influence on the MSE diminishes when the sample size increases.

\begin{figure}
\centering
\psfrag{g1}[c][c][1.0]{\;\;\;\;\;\scriptsize{$\hskip+8ex \beta\colon \alpha=s=1$}}
\psfrag{g2}[c][c][1.0]{\;\;\;\;\;\scriptsize{$\hskip+7.7ex \alpha \colon \beta=s=1$}}
\psfrag{g3}[c][c][1.0]{\;\;\;\;\;\scriptsize{$\hskip+8.3ex s \colon \alpha=\beta=1$}}
\psfrag{Influence}[c][c][1.0]{\;\;\;\;\;\scriptsize{MSE}}
\psfrag{ABA}[c][c][1.0]{\;\;\;\;\;\scriptsize{Parameter values}}

\subfigure[$N=49$\label{influ1}]{\includegraphics[width=.48\linewidth]{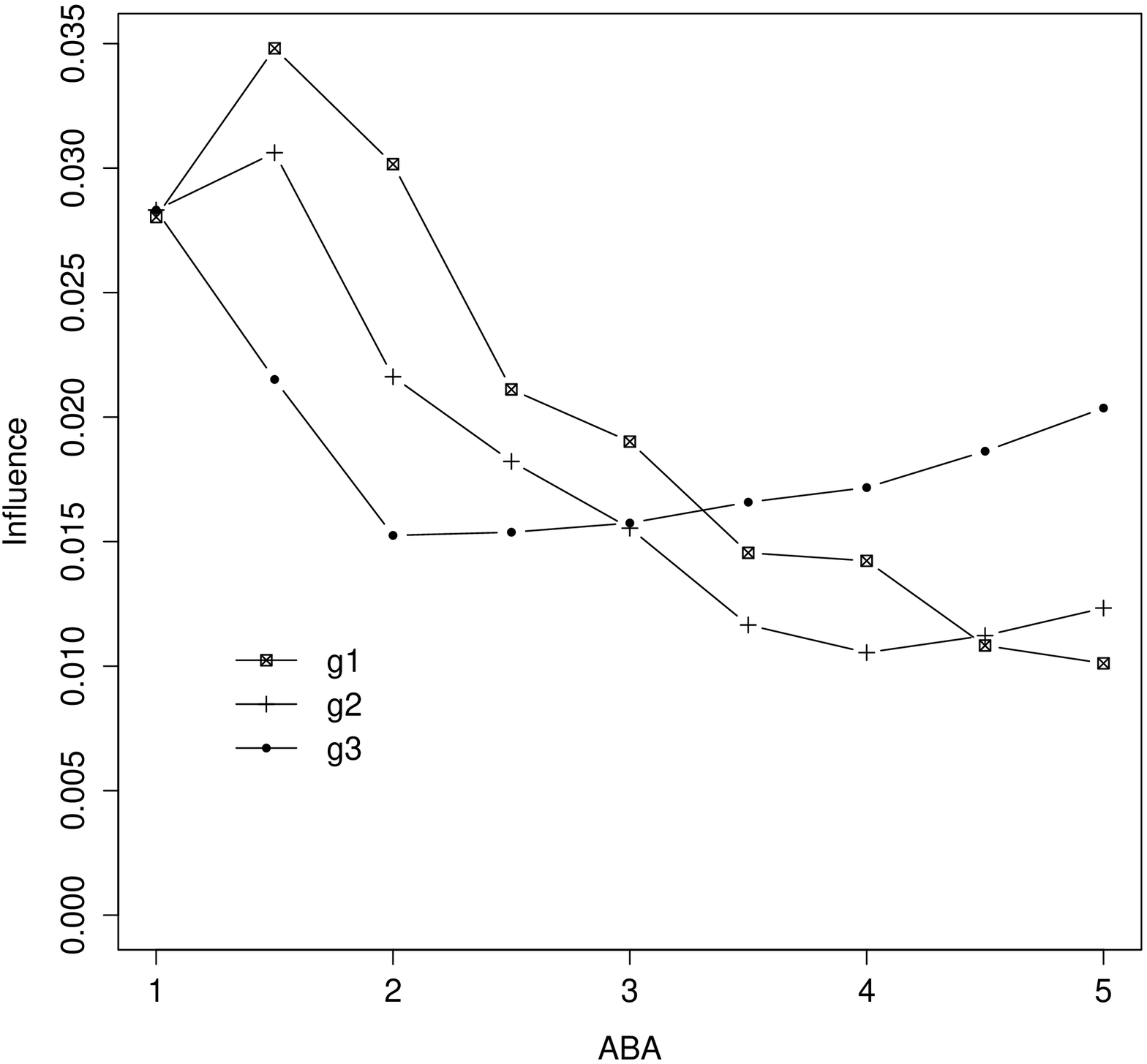}}
\subfigure[$N=121$\label{influ2}]{\includegraphics[width=.48\linewidth]{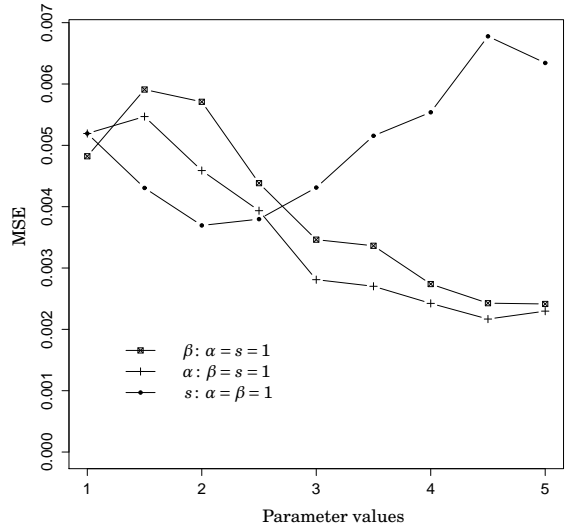}} \\
\subfigure[$N=400$\label{influ3}]{\includegraphics[width=.48\linewidth]{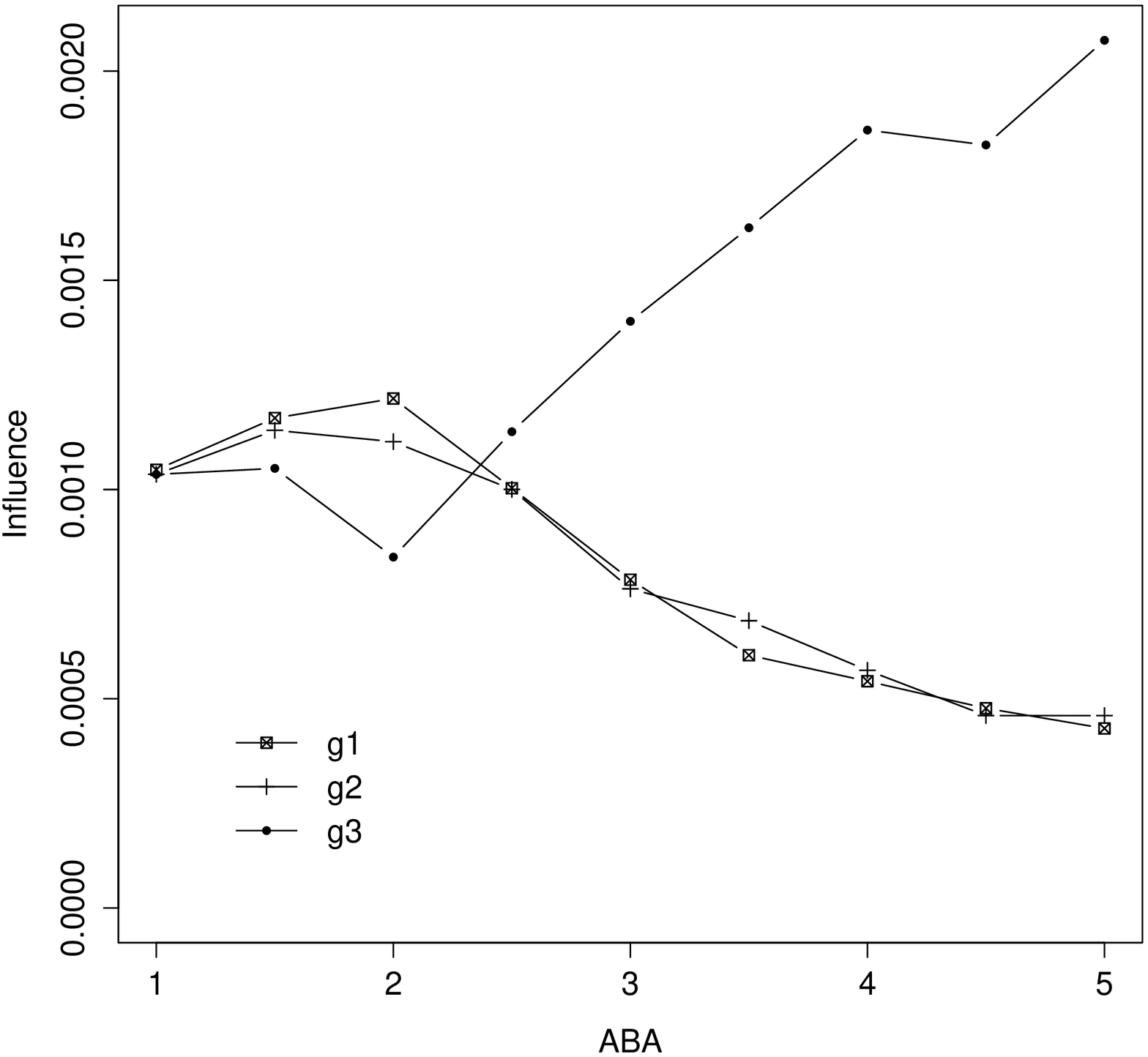}}
\caption{Mean square error for several pararameter values.}
\label{influ}
\end{figure}

\subsection{SAR Image Modeling}

In recent years, the interest in understanding such type of imagery in a multidimensional and multilook perspective has increased.
In this case,
the current data is termed multilook polarimetric SAR (PolSAR).
In such situation,
backscattered signals are recorded as complex elements for all possible combinations of linear reception and transmission polarizations: HH, HV, and VV (H for horizontal and V for vertical polarization).
In particular, the intensity of the echoed signal polarization channels plays an important role, since it depends on the physical properties of the target surface.
Figure~\ref{Foulum} presents an image over the surroundings of
Foulum (Denmark) obtained by the SAR system EMISAR~\cite{Doulgerisetal2011}.
This is a polarimetric SAR image,
i.e.,
their pixels are represented by
3$\times$3 Hermitian positive definite matrices
whose diagonal elements are positive real intensities:
denominated by HH, HV, and~VV.

\begin{figure}
\centering
\psfrag{A1}[c][c][1.0]{\footnotesize{\quad\quad\quad\quad\quad\quad\quad\color{white}\textbf A1}}
\psfrag{A2}[c][c][1.0]{\footnotesize{\quad\color{black}\textbf A2}}
\psfrag{A3}[c][c][1.0]{\footnotesize{\color{white}\textbf A3}}
\psfrag{A}[c][c][1.0]{\;\;\;\;\;\tiny{BGN}}
\psfrag{B}[c][c][1.0]{\;\;\;\;\;\;\;\;\;\tiny{GAMMA}}
\psfrag{YP}[c][c][1.0]{\tiny{Empirical Density}}
\psfrag{YP1}[c][c][1.0]{\tiny{Empirical Density}}
\psfrag{XP}[c][c][1.0]{\tiny{Returned Signal Intensity}}
\psfrag{XP1}[c][c][1.0]{\tiny{Returned Signal Intensity}}
\psfrag{Density}[c][c][1.0]{\;\;\;\;\;\footnotesize{Density}}
\psfrag{HH1}[c][c][1.0]{\;\;\;\;\;\footnotesize{Intensity ($\times$ 10$^{-2}$)}}
\psfrag{HH2}[c][c][1.0]{\;\;\;\;\;\footnotesize{Intensity ($\times$ 10$^{-1}$)}}
\psfrag{HH3}[c][c][1.0]{\;\;\;\;\;\footnotesize{Intensity ($\times$ 10$^{-3}$)}}
\psfrag{HV1}[c][c][1.0]{\;\;\;\;\;\footnotesize{Intensity ($\times$ 10$^{-3}$)}}
\psfrag{HV2}[c][c][1.0]{\;\;\;\;\;\footnotesize{Intensity ($\times$ 10$^{-2}$)}}
\psfrag{HV3}[c][c][1.0]{\;\;\;\;\;\footnotesize{Intensity ($\times$ 10$^{-4}$)}}
\psfrag{VV1}[c][c][1.0]{\;\;\;\;\;\footnotesize{Intensity ($\times$ 10$^{-2}$)}}
\psfrag{VV2}[c][c][1.0]{\;\;\;\;\;\footnotesize{Intensity ($\times$ 10$^{-2}$)}}
\psfrag{VV3}[c][c][1.0]{\;\;\;\;\;\footnotesize{Intensity ($\times$ 10$^{-3}$)}}
\psfrag{0}[c][c][1.0]{\tiny{0}}
\psfrag{500}[c][c][1.0]{\tiny{500}}
\psfrag{1000}[c][c][1.0]{\tiny{1000}}
\psfrag{1500}[c][c][1.0]{\tiny{1500}}
\psfrag{2000}[c][c][1.0]{\tiny{2000}}
\psfrag{4000}[c][c][1.0]{\tiny{4000}}
\psfrag{6000}[c][c][1.0]{\tiny{6000}}
\psfrag{8000}[c][c][1.0]{\tiny{8000}}

\psfrag{2e-04}[c][c][1.0]{\tiny{2}}
\psfrag{4e-04}[c][c][1.0]{\tiny{4}}
\psfrag{6e-04}[c][c][1.0]{\tiny{6}}
\psfrag{8e-04}[c][c][1.0]{\tiny{8}}
\psfrag{1e-03}[c][c][1.0]{\tiny{10}}

\psfrag{0.0010}[c][c][1.0]{\tiny{1.0}}
\psfrag{0.0015}[c][c][1.0]{\tiny{1.5}}
\psfrag{0.0020}[c][c][1.0]{\tiny{2.0}}
\psfrag{0.0025}[c][c][1.0]{\tiny{2.5}}
\psfrag{0.0030}[c][c][1.0]{\tiny{3.0}}
\psfrag{0.0035}[c][c][1.0]{\tiny{3.5}}

\psfrag{0.000}[c][c][1.0]{\tiny{0}}
\psfrag{0.001}[c][c][1.0]{\tiny{1}}
\psfrag{0.002}[c][c][1.0]{\tiny{2}}
\psfrag{0.003}[c][c][1.0]{\tiny{3}}
\psfrag{0.004}[c][c][1.0]{\tiny{4}}
\psfrag{0.005}[c][c][1.0]{\tiny{5}}
\psfrag{0.010}[c][c][1.0]{\tiny{10}}
\psfrag{0.015}[c][c][1.0]{\tiny{15}}
\psfrag{0.020}[c][c][1.0]{\tiny{20}}

\psfrag{1}[c][c][1.0]{\tiny{1}}
\psfrag{2}[c][c][1.0]{\tiny{2}}
\psfrag{3}[c][c][1.0]{\tiny{3}}
\psfrag{4}[c][c][1.0]{\tiny{4}}
\psfrag{5}[c][c][1.0]{\tiny{5}}
\psfrag{6}[c][c][1.0]{\tiny{6}}
\psfrag{8}[c][c][1.0]{\tiny{8}}
\psfrag{10}[c][c][1.0]{\tiny{10}}
\psfrag{12}[c][c][1.0]{\tiny{12}}
\psfrag{14}[c][c][1.0]{\tiny{14}}
\psfrag{15}[c][c][1.0]{\tiny{15}}
\psfrag{20}[c][c][1.0]{\tiny{20}}
\psfrag{30}[c][c][1.0]{\tiny{30}}
\psfrag{100}[c][c][1.0]{\tiny{100}}
\psfrag{200}[c][c][1.0]{\tiny{200}}
\psfrag{300}[c][c][1.0]{\tiny{300}}
\psfrag{400}[c][c][1.0]{\tiny{400}}

\psfrag{0.00}[c][c][1.0]{\tiny{0}}
\psfrag{0.05}[c][c][1.0]{\tiny{5}}
\psfrag{0.10}[c][c][1.0]{\tiny{10}}
\psfrag{0.15}[c][c][1.0]{\tiny{15}}
\psfrag{0.20}[c][c][1.0]{\tiny{20}}
\psfrag{0.25}[c][c][1.0]{\tiny{25}}
\psfrag{0.30}[c][c][1.0]{\tiny{30}}

\psfrag{0.00}[c][c][1.0]{\tiny{0}}
\psfrag{0.05}[c][c][1.0]{\tiny{5}}
\psfrag{0.10}[c][c][1.0]{\tiny{10}}
\psfrag{0.15}[c][c][1.0]{\tiny{15}}
\psfrag{0.20}[c][c][1.0]{\tiny{20}}
\psfrag{0.25}[c][c][1.0]{\tiny{25}}
\psfrag{0.30}[c][c][1.0]{\tiny{30}}

\psfrag{0.0}[c][c][1.0]{\tiny{0}}
\psfrag{0.2}[c][c][1.0]{\tiny{2}}
\psfrag{0.4}[c][c][1.0]{\tiny{4}}
\psfrag{0.6}[c][c][1.0]{\tiny{6}}
\psfrag{0.8}[c][c][1.0]{\tiny{8}}

\subfigure[Foulum (Denmark) \label{Foulum}]{\includegraphics[width=.4\linewidth]{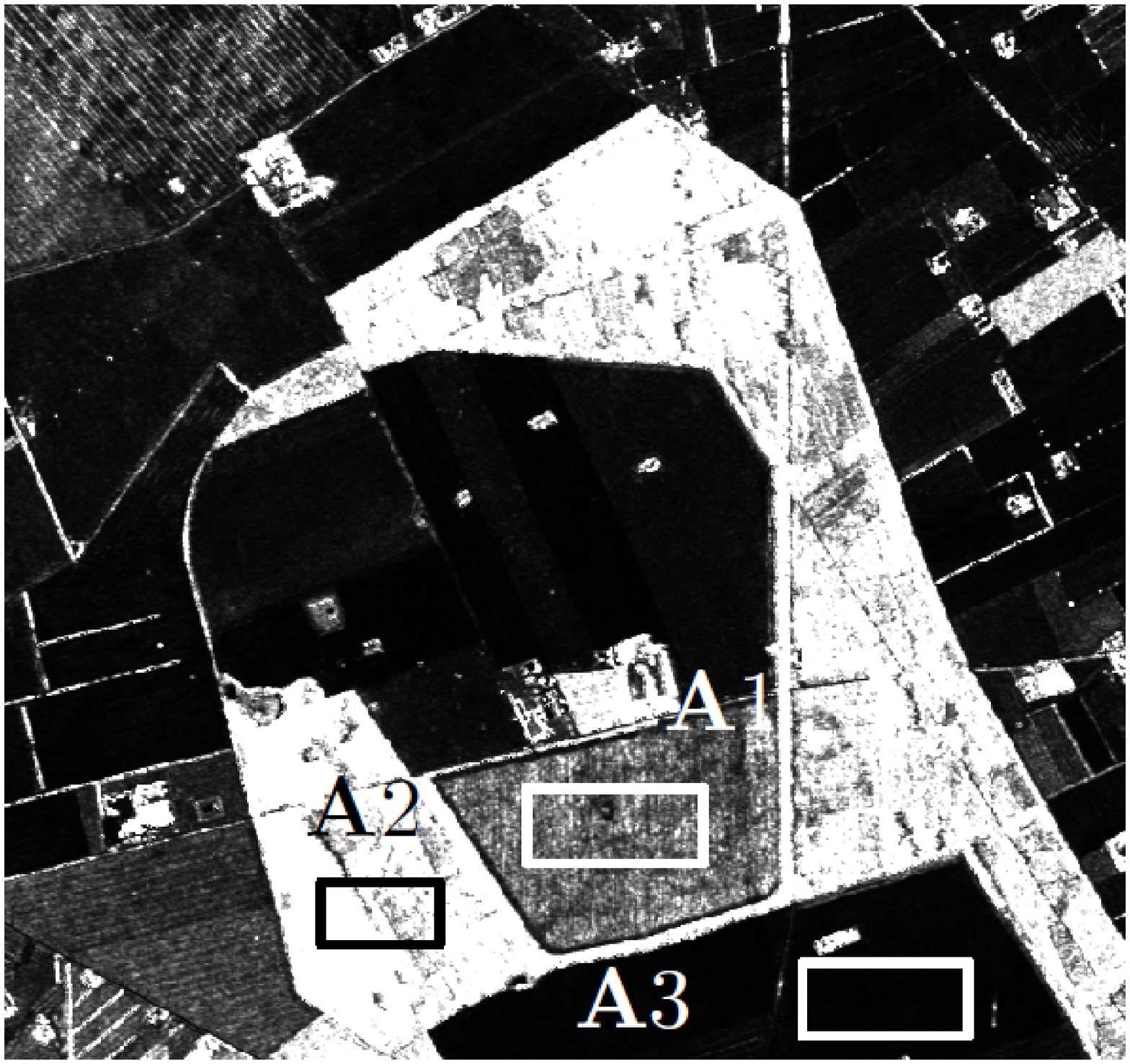}}\\

\subfigure[(channel,region)=(HH,A1) \label{a11}]{\includegraphics[width=.3\linewidth]{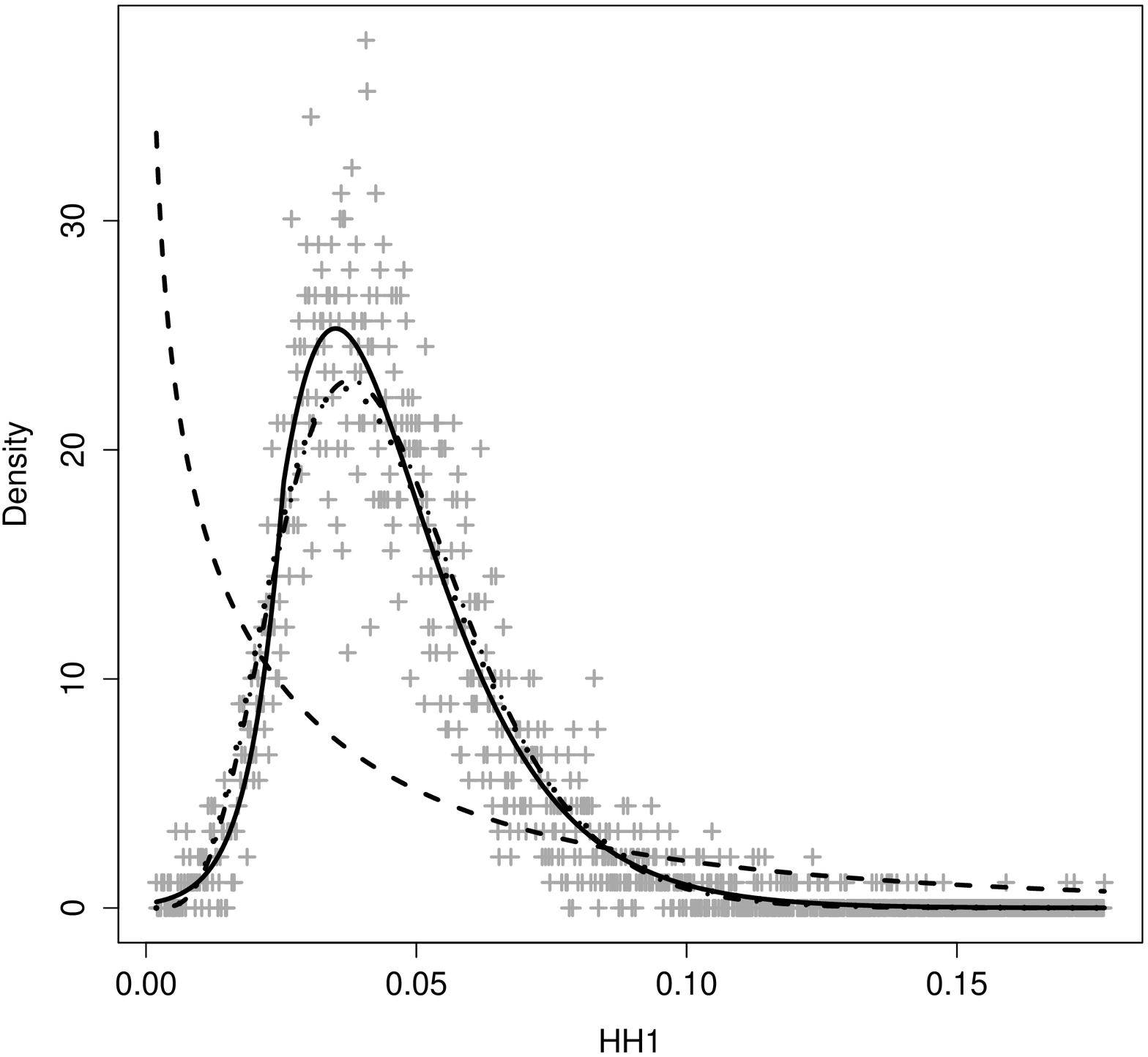}}
\subfigure[(channel,region)=(HV,A1)  \label{a12}]{\includegraphics[width=.3\linewidth]{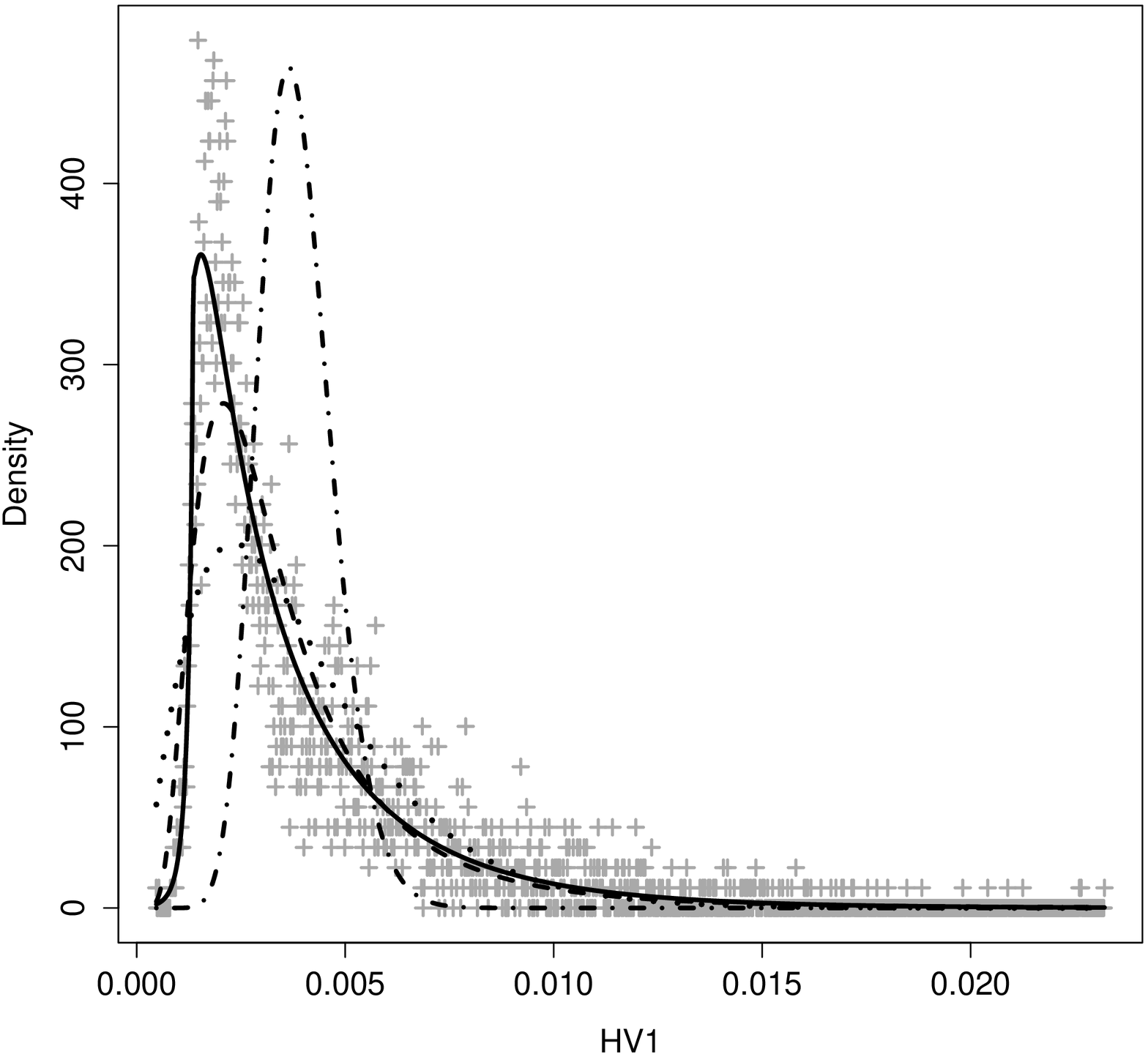}}
\subfigure[(channel,region)=(VV,A1)  \label{a13}]{\includegraphics[width=.3\linewidth]{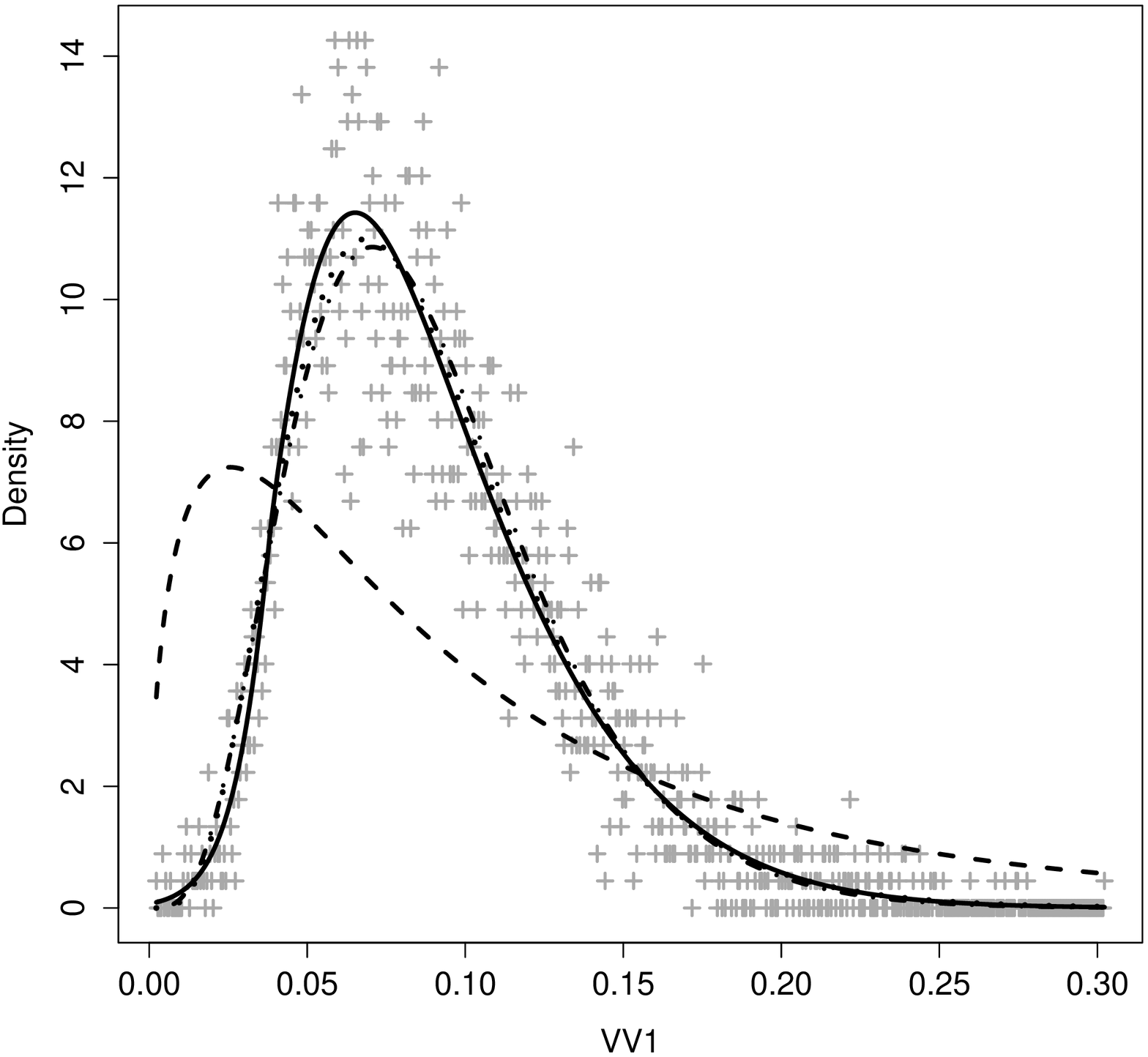}}\\

\subfigure[(channel,region)=(HH,A2) \label{a21}]{\includegraphics[width=.3\linewidth]{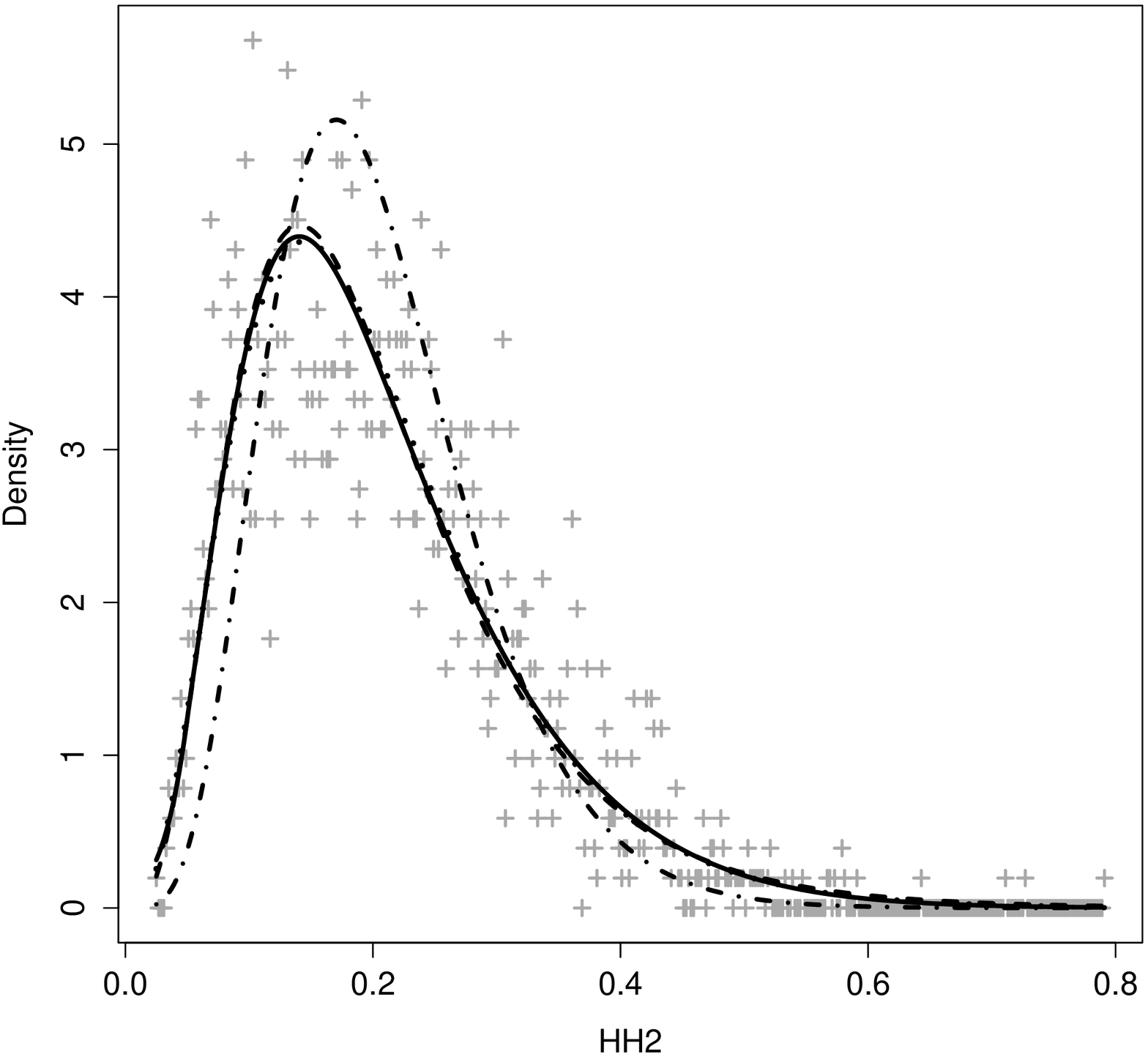}}
\subfigure[(channel,region)=(HV,A2)  \label{a22}]{\includegraphics[width=.3\linewidth]{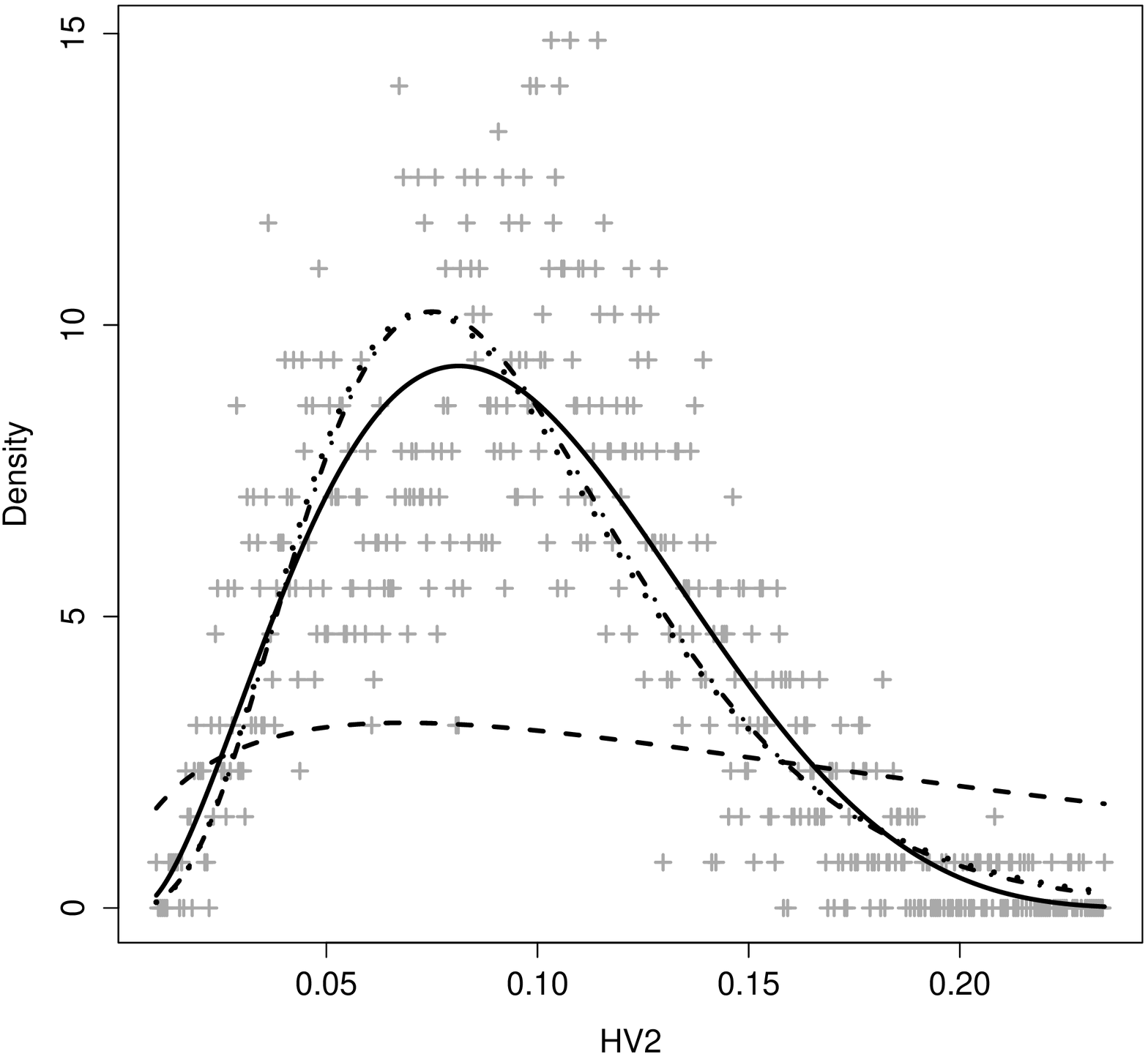}}
\subfigure[(channel,region)=(VV,A2) \label{a23}]{\includegraphics[width=.3\linewidth]{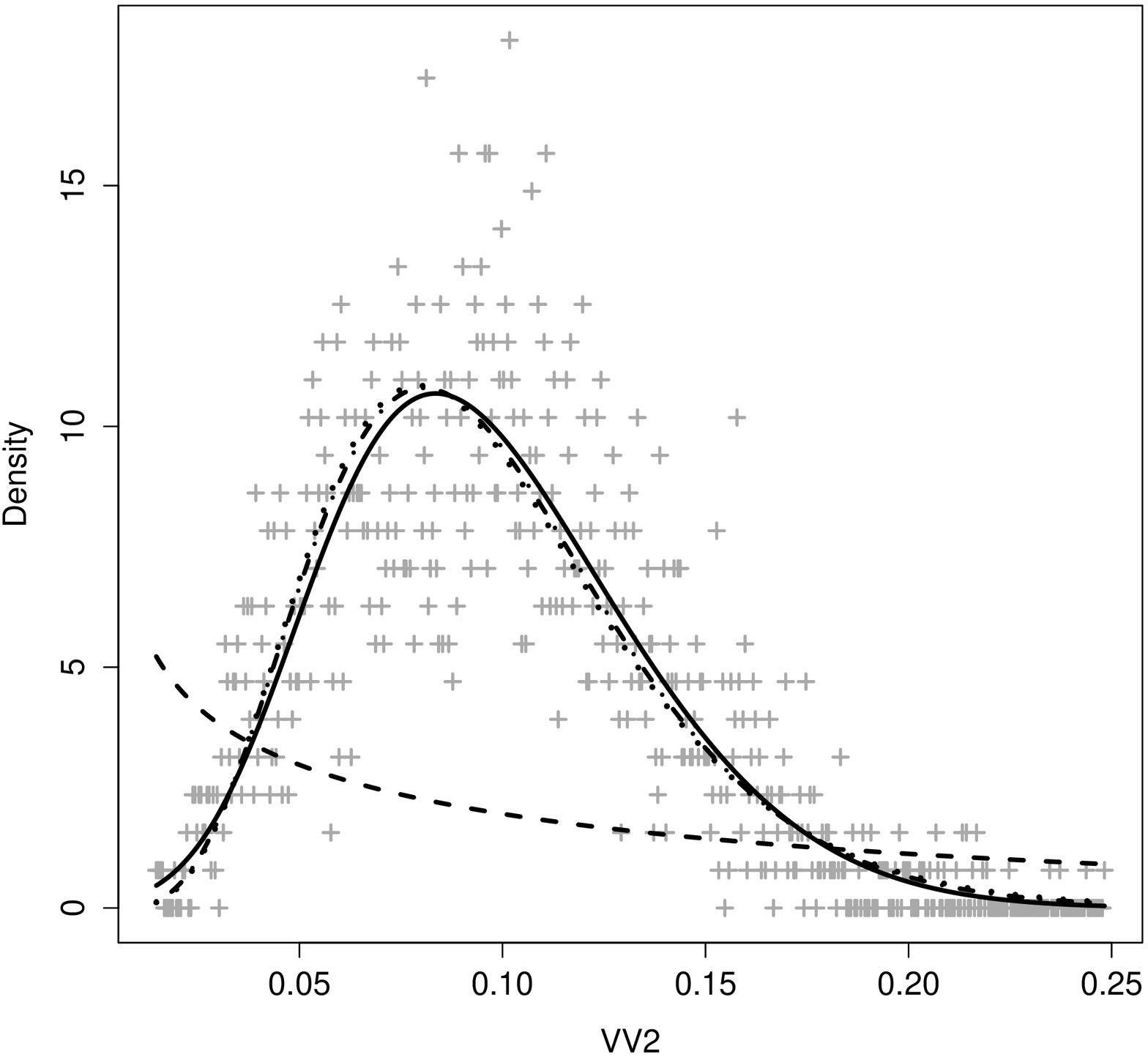}}\\

\subfigure[(channel,region)=(HH,A3)\label{a31}]{\includegraphics[width=.3\linewidth]{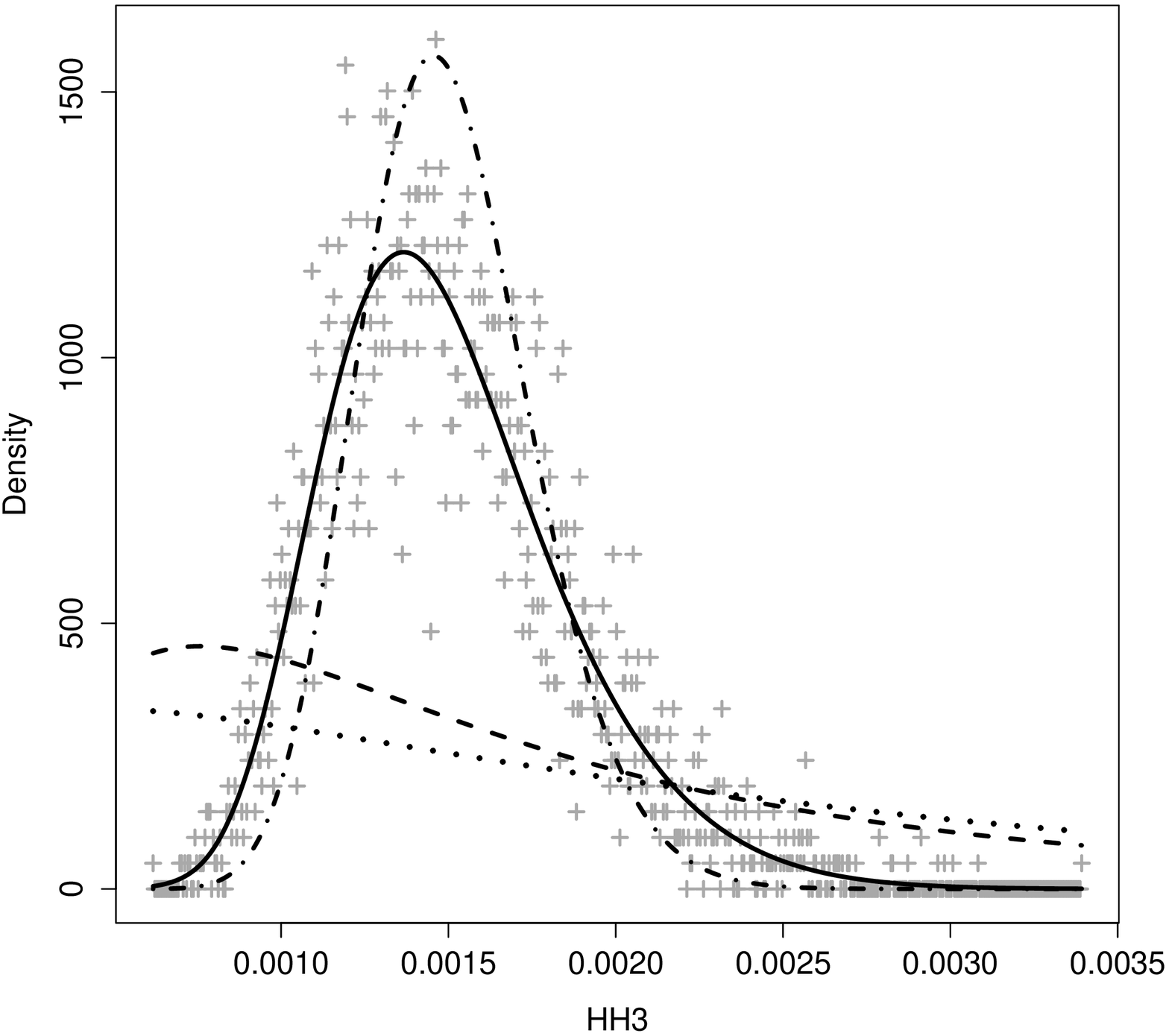}}
\subfigure[(channel,region)=(HV,A3)  \label{a32}]{\includegraphics[width=.3\linewidth]{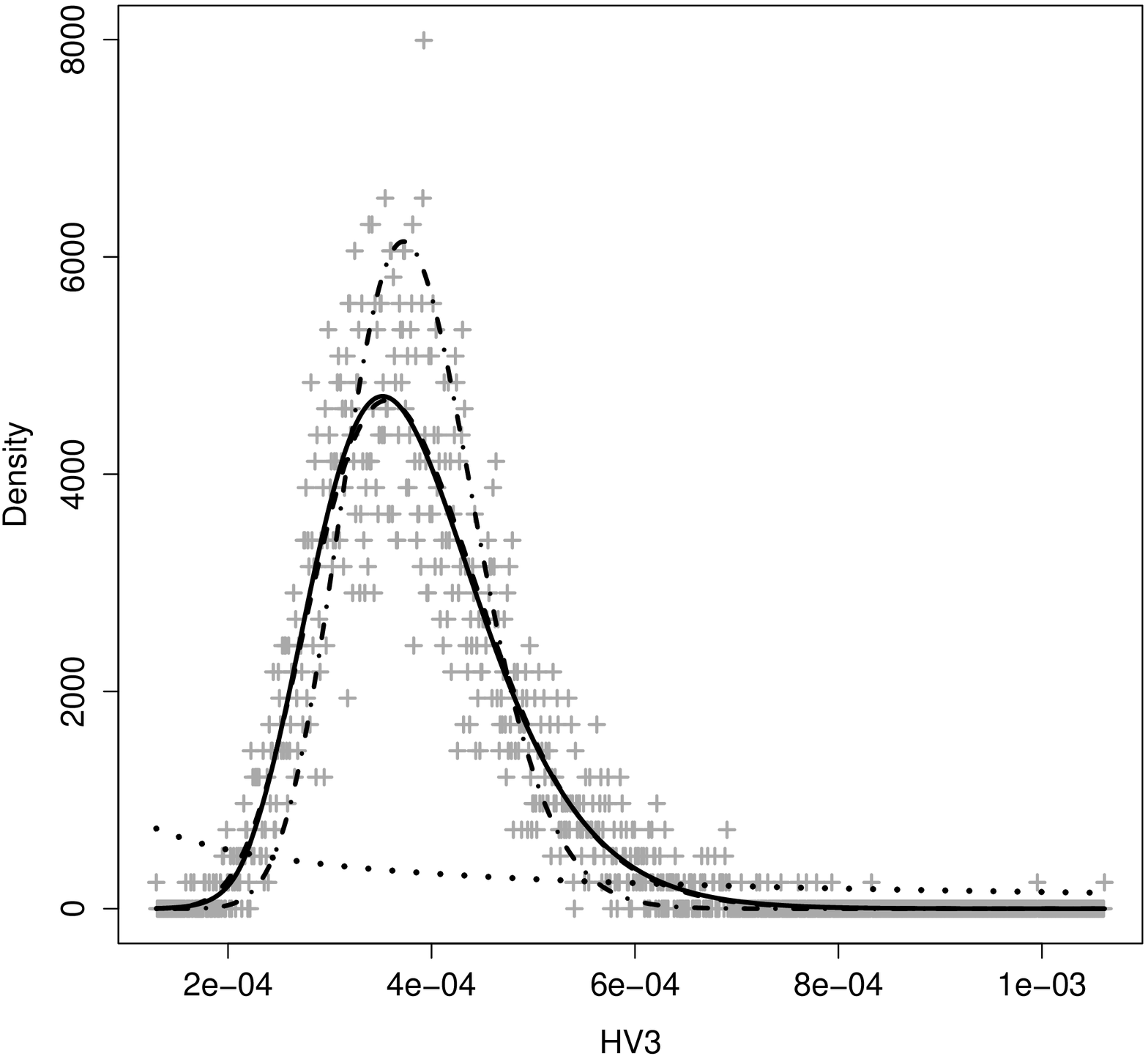}}
\subfigure[(channel,region)=(VV,A3)  \label{a33}]{\includegraphics[width=.3\linewidth]{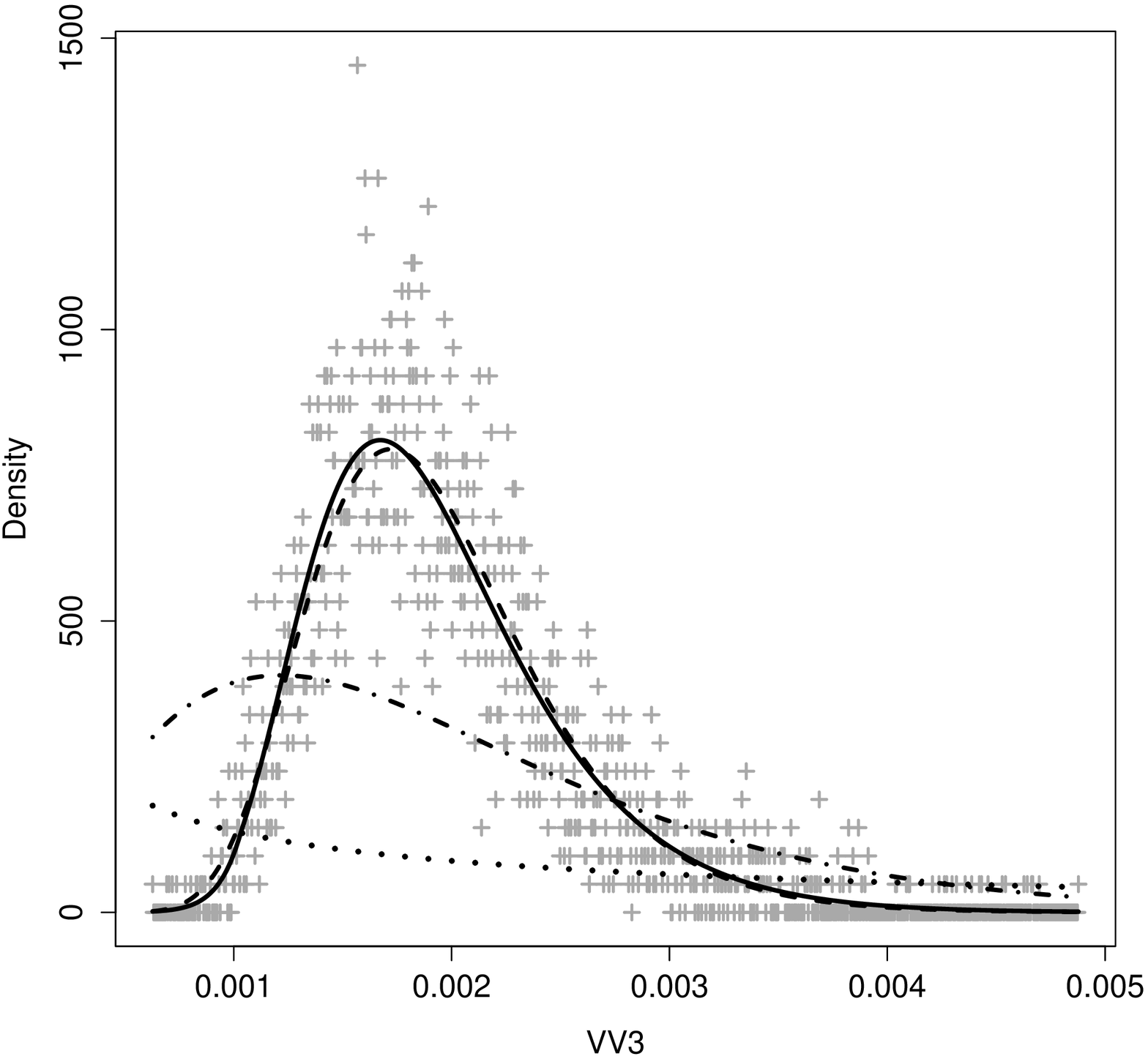}}
\caption{Plots of empirical densities (+) vs. fitted densities of BGN (solid curves), $\mathcal{G}^0$ (dashed curves), $\mathcal{K}$ (dot curves), and  $\Gamma$ (dashes and dot curves) distributions.}
\label{imagesSAR}
\end{figure}

In the context of segmentation and edge detection in images of skin,
El-Zaart and Diou~\cite{Zaart2010} presented numerical evidence indicating that the beta distribution is more adequate than the gamma distribution.
Notice that as shown in~\eqref{limit.beta} the BGN distribution collapses to the beta distribution as $s\to\infty$.

Therefore, we propose the BGN distribution as a more general model for analyzing SAR data.
We compare our results to the
$\mathcal{G}^0$~\cite{freryetal1997a},
$\mathcal{K}$~\cite{Blacknell1994},
and
$\Gamma$~\cite{Delignonetal2002} distributions,
which are regarded as classical stochastic models for SAR data.

To compare the aforementioned models,
we select three sub-images,
whose descriptive statistical measure
are displayed in Table~\ref{table2}.
Notice that the sample mean and median on actual data, $\overline{Z}$ and $\widetilde{Z}$, obey the following inequality: A2 $>$ A1 $>$ A3.
This fact characterizes A1, A2, and A3 as regions with strong, moderate, and weak returns, respectively.
In terms of standard deviation~$\operatorname{sd}(Z)$
and coefficient of variation~$\operatorname{CV}(Z)$,
region A3 presents a small degree of variability.
Table~\ref{table2} also lists the ML estimates of the BGN parameters,
 which indicate similar conclusions to those obtained in the descriptive analysis.

\begin{table}
\centering
\scriptsize
\caption{Statistical descriptive measures and ML estimates for distribution BGN parameters} \label{table2}
\begin{tabular}{c@{ }c c@{ }c@{ }c@{ }c c@{ }c@{ }c@{ }c@{ }c} \toprule
\multirow{2}{*}{Regions} & \multirow{2}{*}{channel} & \multicolumn{4}{c}{Descriptive Measures} & \multicolumn{5}{c}{ML Estimates}  \\
\cmidrule(lr{.25em}){3-6} \cmidrule(lr{.25em}){7-11}
&         &  $\overline{Z}$ ($\times 10^{-3}$) & $\widetilde{Z}$ ($\times 10^{-3}$) & $\operatorname{sd}(Z)$ &
$\operatorname{CV}(Z)$ & $\widehat{\alpha}$ & $\widehat{\beta}$ ($\times 10^{-2}$) & $\widehat{\mu}$ ($\times 10^{-3}$) &
$\widehat{\sigma}$ & $\widehat{s}$ \\ \midrule
A1 & HH &  45.42 &  42.14 &  1.96$\times 10^{-2}$ & $43.16$ & 1.30 & 2.61 &  15.16 &  1.97 &  0.50  \\
   & HV &   3.85 &   2.78 &  0.28$\times 10^{-2}$ & $73.31$ & 0.83 & 0.14 &   0.30 &  1.90 &  0.21  \\
   & VV &  88.95 &  82.06 &  4.11$\times 10^{-2}$ & $46.20$ & 1.40 & 4.03 &  23.03 &  1.71 &  0.25  \\ \midrule
A2 & HH & 204.43 & 191.20 & 10.50$\times 10^{-2}$ & $51.35$ & 1.36 & 4.87 &  57.84 &  3.16 &  0.24  \\
   & HV &  94.45 &  94.45 &  4.01$\times 10^{-2}$ & $42.50$ & 2.06 & 4.12 &  36.87 &  1.43 &  0.21  \\
   & VV &  97.01 &  95.31 &  3.84$\times 10^{-2}$ & $39.54$ & 2.00 & 3.32 &  37.20 &  2.54 &  0.21  \\ \midrule
A3 & HH &   1.50 &   1.46 &  3.59$\times 10^{-4}$ & $23.91$ & 1.66 & 0.11 &   0.30 &  1.77 &  0.31  \\
   & HV &   0.38 &   0.37 &  0.92$\times 10^{-4}$ & $23.86$ & 1.42 & 0.02 &   0.14 &  6.32 &  1.07  \\
   & VV &   1.93 &   1.84 &  5.72$\times 10^{-4}$ & $29.57$ & 1.30 & 0.10 &   0.59 &  5.40 &  0.70  \\
\bottomrule
\end{tabular}
\end{table}

Figures~\ref{a11}-\ref{a33} exhibit fitted curves and empirical densities
for all considered sub-images and distributions.
None of the classical models ($\mathcal{G}^0$, $\mathcal{K}$, and $\Gamma$)
could adequately characterize all polarization channels.
For example, considering region A3,
the $\mathcal{G}^0$ distribution
is well suited only for HH and HV polarization channels.
In contrast, the proposed model could perform well in all polarization channels.
In order to numerically compare the classical SAR modeling and the proposed BGN model,
we adopt the following measures of goodness-of-fit~\protect\cite{Gao2010,SeghouaneAmari2007}:
Akaike's information criterion (AIC),
its corrected version (AICc),
and Bayesian information criterion (BIC).
The results are presented  in Table~\ref{table3},
where best performances are in boldface.
Except for data of region A2 and polarization channel HH,
the BGN distribution could outperform all classical models.

\begin{table}
\centering
\setlength{\tabcolsep}{2pt}
\scriptsize
\caption{Goodness-of-fit measures for SAR image models based on actual data} \label{table3}
\begin{tabular}{c@{ }c c@{ }c@{ }c c@{ }c@{ }c  c@{ }c@{ }c} \toprule
\multicolumn{2}{c}{Model}  & \multicolumn{3}{c}{A1} & \multicolumn{3}{c}{A2} & \multicolumn{3}{c}{A3} \\
\cmidrule(lr{.25em}){3-5} \cmidrule(lr{.25em}){6-8} \cmidrule(lr{.25em}){9-11}
\multicolumn{2}{c}{}& AIC & AICc & BIC & AIC & AICc & BIC & AIC & AICc & BIC \\ \midrule
BGN
   & HH &  \textbf{-23493.26} & \textbf{-23491.24} & \textbf{-23486.44}  & -4648.91 & -4646.87 & -4643.22  & \textbf{-54100.65} & \textbf{-54098.63} & \textbf{-54094.00} \\
   & HV &  \textbf{-43918.24} & \textbf{-43916.23} & \textbf{-43911.43}  & \textbf{-9226.70} & \textbf{-9224.66} & \textbf{-9221.01}  & \textbf{-65430.96} & \textbf{-65428.94} & \textbf{-65424.30} \\
   & VV &  \textbf{-16703.57} & \textbf{-16701.55} & \textbf{-16696.75}  & \textbf{-9501.40} & \textbf{-9499.36} & \textbf{-9495.71}  & \textbf{-50662.42} & \textbf{-50660.40} & \textbf{-50655.77} \\ \midrule
$\mathcal{G}^0$
   & HH &  -16021.29 & -16019.28 & -16010.47  & -4645.52 & -4643.51 & -4635.83  & -47547.37 & -47545.36 & -47536.72 \\
   & HV &  -43578.39 & -43576.38 & -43567.57  & -5454.64 & -5452.63 & -5444.95  & -65430.21 & -65428.20 & -65419.56 \\
   & VV &  -13175.16 & -13173.15 & -13164.34  & -3700.03 & -3698.01 & -3690.34  & -50643.55 & -50641.54 & -50632.90 \\ \midrule
$\mathcal{K}$
   & HH &  -23389.38 & -23387.37 & -23378.56  & \textbf{-4670.55} & \textbf{-4668.53} & \textbf{-4660.86}  & -45705.15 & -45703.14 & -45694.50 \\
   & HV &  -42769.62 & -42767.61 & -42758.80  & -9135.29 & -9133.27 & -9125.60  & -48153.30 & -48151.29 & -48142.65 \\
   & VV &  -16653.93 & -16651.92 & -16643.11  & -9468.13 & -9466.11 & -9458.44  & -37370.50 & -37368.49 & -37359.85 \\ \midrule
$\Gamma$
& HH & -23377.23 & -23375.22 & -23362.41 & -4294.44 & -4292.43 & -4280.75 & -53056.29 & -53054.28 & -53041.64 \\
& HV & -21612.42 & -21610.42 & -21597.60 & -9171.60 & -9169.59 & -9157.91 & -64399.02 & -64397.02 & -64384.37 \\
& VV & -16657.36 & -16655.36 & -16642.54 & -9491.76  & -9489.76 & -9478.07 & -47268.50 & -47266.50 & -47253.85 \\
\bottomrule
\end{tabular}
\end{table}

\section{Conclusion}
\label{section-conclusion}

The proposed beta generalized normal distribution is an extension
of the generalized normal distribution previously introduced in~\cite{nadarajah2005generalized}.
We provide a comprehensive mathematical discussion for the new distribution,
which includes
shape and asymptotic behavior analysis
and
the derivation of the hazard rate function.
Power series expansions
for the moments,
for the moment generating function,
and
for the mean deviations about the mean and median
are also determined.
The method of maximum likelihood is used to estimate the model parameters.
Additionally,
we employ the derived statistical tools in the context of image processing
of radar data.
By means of Akaike's information criterion,
we show that the BGN model more adequately describes
the statistical distribution of the image pixels
from
pasture and ocean data.
The new distribution provides better fits than
the gamma model,
which is usually employed for this type of data.

\appendix

\section{Proof of Proposition~\ref{proposition-moment}}
\label{appendix-proof-proposition}

Using the generalized binomial expansion,
we have:
\begin{align*}
\mathrm{E}(X^n)&=\frac{1}{\sigma \mathrm{B}(\alpha,\beta)}
\sum_{j=0}^{\infty}
(-1)^j\binom{\beta-1}{j}\int_{-\infty}^\infty x^n
\left[\Phi_s\left(\frac{x-\mu}{\sigma}\right)
\right]^{j+\alpha-1}
\phi_s \left(\frac{x-\mu}{\sigma}\right)
\mathrm{d}x.
\end{align*}
We define the auxiliary quantity
$
h_j^{(s)}=
\left[
\Phi_s\left(
\frac{x-\mu}{\sigma}
\right)
\right]^{j+\alpha-1}
\phi_s\left(\frac{x-\mu}{\sigma}\right)
$.
Thus,
\begin{align*}
\mathrm{E}(X^n)
=\int_{-\infty}^\infty x^n g(x) \mathrm{d}x=
\frac{1}{\sigma\mathrm{B}(\alpha,\beta)}
\sum_{j=0}^{\infty}
(-1)^j\binom{\beta-1}{j}\,\int_{-\infty}^\infty x^n\,h_j^{(s)}
\mathrm{d}x.
\end{align*}
Setting $z =\frac{x-\mu}{\sigma}$ yields
\begin{align}
\nonumber
\int_{-\infty}^\infty x^n\,h_j^{(s)}
\mathrm{d}x
&=
\sigma\,\int_{-\infty}^\infty
(\sigma z+\mu)^n\,[\Phi_s(z)]^{j+\alpha-1}\,\phi_s(z)
\mathrm{d}z\\
\label{eq.section5}
&=
\sigma
\mu^n\,\sum_{i=0}^n\,\binom{n}{i}\,
\left(\frac{\sigma}{\mu}
\right)^i\,\int_{-\infty}^\infty z^i\,
[\Phi_s(z)]^{j+\alpha-1}\,\phi_s(z)\mathrm{d}z.
\end{align}
Spliting the integration range of~\eqref{eq.section5} in two
and considering
$\Phi_s(-z)= 1-\Phi_s(z)$ and
$\phi_s(z) =\phi_s(-z)$~\cite{nadarajah2005generalized}
yields
\begin{align*}
\int_{-\infty}^\infty z^i\,\phi_s(z)\,[\Phi_s(z)]^{j+\alpha-1}\mathrm{d}z
=&\int_0^\infty z^i\,\phi_s(z)\,[\Phi_s(z)]^{j+\alpha-1}\mathrm{d}z
\\
&+ \int_0^\infty (-1)^i\,z^i\,\phi_s(z)
[1-\Phi_s(z)]^{j+\alpha-1}\mathrm{d}z.
\end{align*}

From Lemma~\ref{lemApp1} in the Appendix, we can express a non-integer power of a
\mbox{cdf} in terms of a power series of this \mbox{cdf}.
We write
$
\Phi_s(z)^\alpha=\sum_{k=0}^\infty v_k(\alpha)\,\Phi_s(z)^k
$,
where
$
v_k(\alpha)=\sum_{m=k}^\infty (-1)^{k+m}\,\binom{\alpha}{m}\,\binom{m}{k}
$.
Hence, based on such expansion, we obtain
\begin{align*}
\int_{-\infty}^\infty z^i\,\phi_s(z)\,[\Phi_s(z)]^{j+\alpha-1}
\mathrm{d}z=&
\sum_{k=0}^\infty v_k(j+\alpha-1)\,\int_0^\infty z^i\,\phi_s(z)\,\Phi_s(z)^k
\mathrm{d}z
\\
&+\sum_{k=0}^\infty (-1)^{i+k}\,\binom{j+\alpha-1}{k}\,\int_0^\infty
z^i\,\phi_s(z)\,\Phi_s(z)^k\mathrm{d}z.
\end{align*}
Defining the auxiliary quantity
$
J_{i,k}^{(s)}=\int_0^\infty z^i\,\phi_s(z)\,\Phi_s(z)^k \mathrm{d}z
$,
where $i,k$ are integers, we have
\begin{align*}
\int_{-\infty}^\infty
z^i\,\phi_s(z)\,[\Phi_s(z)]^{j+\alpha-1}\,\mathrm{d}z=\sum_{k=0}^\infty
\left[v_k(j+\alpha-1)+(-1)^{i+k}\,\binom{j+\alpha-1}{k}\right]\,J_{i,k}^{(s)}.
\end{align*}

Substituting the above result into the expression for the $n$th moment,
we have proved the proposition.

\section{Evaluation of $J_{i,k}^{(s)}$}
\label{appendix-J}

First let us define the following more general
quantity
\begin{align*}
J_{i,k}^{(s)}(l_1,l_2)&=\int_{l_1}^{l_2} z^i \phi_s(z)\Phi_s(z)^k \mathrm{d}z,
\end{align*}
where $0\leq l_1\leq l_2$. Thus, $J_{i,k}^{(s)}=J_{i,k}^{(s)}(0,\infty)$.

Applying the expression for $\Phi_s(z)$ in the definition of
$J_{i,k}^{(s)}(l_1,l_2)$ and letting $y=z^s$ yields
\begin{align*}
J_{i,k}^{(s)}(l_1,l_2)
&=
\frac{1}{[2\Gamma(1/s)]^{k+1}}
\int_{l_1}^{l_2}
y^{\frac{i+1}{s}-1}\,\exp(-y)
\left[
2\Gamma(1/s)
-\Gamma(1/s,y)
\right]^k
\mathrm{d}y.
\end{align*}

The incomplete gamma function admits the power
series expansion
$
\Gamma(a,x)=\Gamma(a)-x^a\,\sum_{m=0}^\infty\frac{(-x)^m}{(a+m)\,m!}
$
as shown in~\cite{nadarajah2008order}.
Hence, using the binomial expansion, we obtain
\begin{equation}
\label{inner.integral}
\begin{split}
J_{i,k}^{(s)}(l_1,l_2)
=&
\frac{1}{[2\Gamma(1/s)]^{k+1}}
\sum_{j=0}^k
\binom{k}{j}
[\Gamma(1/s)]^{k-j}
\\
&
\times
\int_{l_1}^{l_2}
y^{\frac{i+1}{s}-1}
\exp(-y)
\left[
y^{1/s}\sum_{m=0}^\infty \frac{(-y)^m}{(1/s+m)m!}
\right]^j
\mathrm{d}y.
\end{split}
\end{equation}
By Corollary~\ref{cor:serie} in the Appendix,
we can rewrite the above
integral as
\begin{align}
\int_{l_1}^{l_2} y^{\frac{i+j+1}{s}-1}\,\exp(-y)
&
\left[
\sum_{m=0}^\infty
\frac{(-y)^m}{(1/s+m)m!}
\right]^j
\mathrm{d}y
=
\sum_{m=0}^\infty
c_{m,j}
\int_{l_1}^{l_2}
y^{m+\frac{i+j+1}{s}-1}
\exp(-y)
\mathrm{d}y
\nonumber \\
&=
\sum_{m=0}^\infty
c_{m,j}
\left[
\Gamma
\left(m+\frac{i+j+1}{s},l_1\right)
-
\Gamma
\left(m+\frac{i+j+1}{s},l_2\right)
\right]
\label{equation-integral}
,
\end{align}
where $c_{0,j}=s^j$ and
$c_{m,j}=(ms)^{-1}\sum_{r=1}^{m}(rj-m+r)\frac{(-1)^r}{(1/s+r)r!}c_{m-r,j}$
for all $m \geq 1$.

Applying~\eqref{equation-integral} in~\eqref{inner.integral}
and letting $l_1=0$ and $l_2=\infty$,
we obtain:
\begin{align}
\label{J-form-1}
J_{i,k}^{(s)}
&=
\frac{1}{[2\Gamma(1/s)]^{k+1}}
\sum_{j=0}^k\binom{k}{j}\,[\Gamma(1/s)]^{k-j}\sum_{m=0}^\infty c_{m,j}\,
\Gamma\left(m+\frac{i+j+1}{s}\right).
\end{align}

\section{Auxiliary Lemmata}
\label{appendix.simple}

\begin{lemma}
If $\alpha>0$, then
\begin{align*}
\Phi_s(z)^\alpha
=
\sum_{r=0}^\infty
\sum_{m=r}^\infty (-1)^{m+r}
\binom{\alpha}{m}\binom{m}{r}
\Phi_s(z)^r.
\end{align*}
\label{lemApp1}
\end{lemma}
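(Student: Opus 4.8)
The plan is to reduce the statement to a standard two-step binomial expansion applied pointwise to the real number $t=\Phi_s(z)$ and then to reorganize the resulting double sum into powers of $t$. First I would fix $z$ and abbreviate $t=\Phi_s(z)$, noting that $0\le t\le 1$ forces $0\le 1-t\le 1$, so that $1-t$ sits in the range where the generalized binomial series behaves well.

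The first step writes $t^\alpha=\bigl[1-(1-t)\bigr]^\alpha$ and applies the generalized (Newton) binomial theorem for the real exponent $\alpha>0$:
$$t^\alpha=\sum_{m=0}^\infty \binom{\alpha}{m}(-1)^m (1-t)^m.$$
The second step expands each integer power by the ordinary binomial theorem, $(1-t)^m=\sum_{r=0}^m\binom{m}{r}(-1)^r t^r$, and substitutes this into the previous display, producing the iterated sum $\sum_{m=0}^\infty\sum_{r=0}^m (-1)^{m+r}\binom{\alpha}{m}\binom{m}{r}\,t^r$, where the product of the two sign factors $(-1)^m(-1)^r$ has been combined into $(-1)^{m+r}$.

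The third and decisive step is to interchange the order of summation, replacing the triangular index set $\{(m,r):0\le r\le m\}$ by $\{(r,m):r\ge 0,\ m\ge r\}$, and then to collect the coefficient of each power $t^r$; restoring $t=\Phi_s(z)$ yields exactly the claimed identity with inner coefficient $\sum_{m=r}^\infty(-1)^{m+r}\binom{\alpha}{m}\binom{m}{r}$.

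I expect this reordering to be the main obstacle, since it is the one place where a genuine analytic rather than purely algebraic argument is required: the outer series of the first step converges for $|1-t|<1$, but the doubly-indexed family is not absolutely summable for $t\in(0,1]$, so Fubini for series cannot be invoked verbatim. I would therefore justify the interchange on the range where the rearrangement is legitimate and define the collected coefficients $v_r(\alpha)=\sum_{m=r}^\infty(-1)^{m+r}\binom{\alpha}{m}\binom{m}{r}$ as the power-series coefficients this procedure produces. The identity $\binom{\alpha}{m}\binom{m}{r}=\binom{\alpha}{r}\binom{\alpha-r}{m-r}$ can be used to recast each inner sum; it also exposes precisely the boundary behavior that makes the interchange delicate, so I would use it mainly as a simplification tool rather than as a shortcut. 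The remaining ingredients, namely the two binomial expansions themselves, are entirely routine.
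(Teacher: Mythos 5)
Your proposal follows exactly the same route as the paper's own proof: write $\Phi_s(z)^\alpha=\{1-[1-\Phi_s(z)]\}^\alpha$, apply the generalized binomial theorem, expand each $[1-\Phi_s(z)]^m$ by the ordinary binomial theorem, and interchange the triangular double sum $\sum_{m=0}^\infty\sum_{r=0}^m$ into $\sum_{r=0}^\infty\sum_{m=r}^\infty$. In fact you are more careful than the paper, which performs this interchange without any comment on its analytic justification, whereas you correctly flag the lack of absolute summability as the one delicate point.
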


\begin{proof}
In order to obtain an expansion for $\Phi_s(z)^\alpha$,
for $\alpha>0$ real non-integer,
we can write the following binomial expansion:
\begin{align*}
\Phi_s(z)^\alpha=\{1-[1-\Phi_s(z)]\}^\alpha
=
\sum_{j=0}^\infty(-1)^j\,\binom{\alpha}{j}\,[1-\Phi_s(z)]^j.
\end{align*}
Consequently, it follows that
\begin{align*}
\Phi_s(z)^\alpha
=
\sum_{m=0}^\infty\sum_{r=0}^m
(-1)^{m+r}\,\binom{\alpha}{m}\,\binom{m}{r}\,\Phi_s(z)^r.
\end{align*}
We can substitute $\sum_{m=0}^\infty\sum_{r=0}^m$ for
$\sum_{r=0}^\infty\sum_{m=r}^\infty$ to obtain
the sought result.
\end{proof}

\begin{lemma}
\begin{align*}
\left[\sum_{m=0}^\infty\frac{(-1)^m}{(1/s+m)m!}y^m \right]^j
=
\sum_{m=0}^\infty c_{m,j} y^{m},
\end{align*}
where $c_{0,j}=s^j$ and
$c_{m,j}=(ms)^{-1}\sum_{r=1}^{m}(rj-m+r)\frac{(-1)^r}{(1/s+r)r!}c_{m-r,j}$
for all $m \geq 1$.
\label{cor:serie}
\end{lemma}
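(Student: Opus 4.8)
The plan is to recognize this as an instance of the classical recurrence for the coefficients of a power of a power series (the logarithmic--derivative / J.C.P.\ Miller trick), rather than expanding the $j$th power directly by the multinomial theorem. First I would set $a_m=\frac{(-1)^m}{(1/s+m)\,m!}$, so that the left-hand factor is $f(y)=\sum_{m=0}^\infty a_m y^m$, and observe that $a_0=s$, while $|a_m|\le 1/m!$ for $m\ge1$, so that $f$ defines an entire function that may be differentiated term by term. Writing $g(y)=[f(y)]^j=\sum_{m=0}^\infty c_{m,j}\,y^m$, the base case is then immediate: $c_{0,j}=g(0)=a_0^{\,j}=s^j$.

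Next I would produce a first-order differential relation for $g$. Since $g=f^j$, differentiation gives $g'=j f^{j-1}f'$, and multiplying through by $f$ yields the key identity
\[
f(y)\,g'(y)=j\,f'(y)\,g(y).
\]
This converts the $j$th-power problem into a \emph{linear} relation between the coefficient sequences $(a_m)$ and $(c_{m,j})$, which is the crux of the argument.

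Then I would extract the coefficient of $y^{\,n-1}$ from both sides. Using $g'(y)=\sum_{r\ge1} r\,c_{r,j}\,y^{r-1}$ and $f'(y)=\sum_{r\ge1} r\,a_r\,y^{r-1}$, the Cauchy product gives
\[
\sum_{r=1}^{n} r\,c_{r,j}\,a_{n-r}=j\sum_{r=1}^{n} r\,a_r\,c_{n-r,j}.
\]
On the left the only term carrying $c_{n,j}$ is $r=n$, contributing $n\,a_0\,c_{n,j}=n s\,c_{n,j}$, whereas the right-hand side involves only $c_{m,j}$ with $m\le n-1$. Isolating that term and re-indexing the remaining left-hand sum via $r\mapsto n-r$ turns $\sum_{r=1}^{n-1} r\,c_{r,j}\,a_{n-r}$ into $\sum_{r=1}^{n}(n-r)\,a_r\,c_{n-r,j}$ (the appended $r=n$ term vanishes because $n-r=0$). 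Combining the two sums over the common index then yields
\[
n s\,c_{n,j}=\sum_{r=1}^{n}\bigl[\,jr-(n-r)\,\bigr]\,a_r\,c_{n-r,j}=\sum_{r=1}^{n}(rj-n+r)\,\frac{(-1)^r}{(1/s+r)\,r!}\,c_{n-r,j},
\]
which is exactly the asserted recurrence after dividing by $n s$. Together with $c_{0,j}=s^j$ this determines the sequence uniquely, so the coefficients of $[f(y)]^j$ must coincide with the $c_{m,j}$, completing the proof.

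The routine parts are the two Cauchy-product coefficient extractions; the step requiring care is the re-indexing that merges $j\sum_r r\,a_r\,c_{n-r,j}$ and $\sum_r (n-r)\,a_r\,c_{n-r,j}$ into the single weight $(rj-n+r)$, where one must check that extending the second sum up to $r=n$ is harmless. I expect the differential identity $f\,g'=j\,f'\,g$ to be the conceptual key: once it is in place the recurrence essentially writes itself.
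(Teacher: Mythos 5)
Your proof is correct, but it differs from the paper's in an unusual way: the paper has no derivation at all — its entire proof is a citation of Gradshteyn and Ryzhik, Sec.\ 0.314, the classical recurrence (J.C.P.\ Miller's formula) for the coefficients of a power of a power series, applied with $a_m=\frac{(-1)^m}{(1/s+m)\,m!}$, so that $a_0=s$. What you have written out is essentially the standard derivation of that tabulated identity: the relation $f\,g'=j\,f'\,g$ for $g=f^j$, coefficient extraction by Cauchy products, and the re-indexing $r\mapsto n-r$ that merges the two sums into the single weight $rj-n+r$. All the details check out: extending the re-indexed sum to $r=n$ is harmless because the extra term carries the factor $n-r=0$; the division by $ns$ is legitimate because $a_0=s>0$; and entirety of $f$ (from $|a_m|\le 1/m!$) justifies termwise differentiation and the Cauchy products, after which the base case $c_{0,j}=a_0^j=s^j$ and the recurrence determine the coefficients uniquely. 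The trade-off is clear: the paper's citation costs one line but leaves the reader to look up the formula, while your argument is self-contained and makes the analytic hypotheses explicit. One small caveat: your chain-rule step $g'=j f^{j-1}f'$ is clean for nonnegative integer $j$, which is all the paper needs (the lemma is invoked inside a binomial sum over integer $j$); for non-integer exponents one would instead phrase the same computation as a formal power-series identity, which is possible since $a_0\neq 0$.
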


\begin{proof}
Considering~\cite[Sec.~0.314]{gradshteyn2000table}
with
$a_m=\frac{(-1)^m}{(1/s+m)m!}$ the result is proved.
\end{proof}

\section*{Acknowledgements}

This work was partially supported by
the
CNPq and FACEPE,
Brazil.
Authors thank the reviewers.

\bibliographystyle{siam}%
\bibliography{bgn}%

\end{document}